\newtheorem{remark}{Remark}[section]
\title{An energy-stable parametric finite element method for  the planar Willmore flow}
\author{Weizhu Bao\thanks{Department of Mathematics, National University of Singapore, Singapore 119076 (matbaowz@nus.edu.sg, URL:
http://blog.nus.edu.sg/matbwz/). }
\and Yifei Li\thanks{Department of Mathematics, National University of
Singapore, Singapore, 119076 (liyifei@nus.edu.sg)}
}
\date{}
\begin{document}

\maketitle
%%%%% Begin Abstract %%%%%%%%%%%

\begin{abstract}
We propose an energy-stable parametric finite element method (PFEM) for the planar Willmore flow and establish its unconditional energy stability of the full discretization scheme. The key lies in the introduction of two novel geometric identities to describe the planar Willmore flow: the first one involves the coupling of the outward unit normal vector $\boldsymbol{n}$ and the normal velocity $V$, and the second one concerns the time derivative of the mean curvature $\kappa$. Based on them, we derive a set of new geometric partial differential equations for the planar Willmore flow, leading to our new fully-discretized and unconditionally energy-stable PFEM. Our stability analysis is also based on the two new geometric identities. Extensive numerical experiments are provided to illustrate its efficiency and validate its unconditional energy stability. 
\end{abstract}
%%%%% end %%%%%%%%%%%

%%%%% Keywords %%%%%%%%%%%
%\pac{}
%\ams{35Q55, 65M70, 65N25, 65N35, 81Q05}

%\pac{68.35.-p, 68.55.Jk, 68.37.-d, 81.16.Rf}

\begin{keywords}
Willmore flow, geometric identity, parametric finite element method, energy-stable
\end{keywords}

\begin{AMS}
65M60, 65M12, 53C44, 35K55
\end{AMS}

\pagestyle{myheadings} \markboth{W. Bao, and Y. Li}
{An Energy-stable PFEM for Planar Willmore Flow}

\section{Introduction}
\label{intro}
The Willmore energy is an important conformal invariant in differential geometry studied by Willmore \cite{thomsen1923konforme,willmore1965note,willmore1993riemannian}, which quantifies the deviation of a surface from a sphere. It is defined as the integral of the squared mean curvature over the surface, thus providing a measure of surface elasticity and is essential in describing surface elastic energy \cite{germain1821recherches,bryant1986reduction}. The critical points of the  Willmore energy, known as Willmore surfaces \cite{willmore1965note}, have wide applications in the understanding of lipid bilayers \cite{canham1970minimum,helfrich1973elastic}, modelling of optimal surfaces \cite{lott1988method,dyn2001optimizing,https://doi.org/10.1111/1467-8659.1330143}, design of optical lenses \cite{santosa2003analysis}, and the regularization energy in geometric flows \cite{Bao17,chen2018efficient}.  The broad applications of the Willmore energy and its corresponding Willmore surfaces extend to biomembranes \cite{seifert1997configurations,bonito2011dynamics,boudaoud2010introduction}, geometric modeling \cite{bohle2008constrained}, materials science \cite{grinspun2003discrete}, string theory \cite{Ginsparg:1993is}, and computational geometry \cite{gruber2020computational,bretin2011regularization}.

Let $\Gamma\subset\mathbb{R}^2$ be a closed planar curve in $\mathbb{R}^2$ with a unit outward normal vector $\boldsymbol{n}$. The Willmore energy $W(\Gamma)$ is formulated as \cite{Deckelnick05}
\begin{equation}\label{eq: def of willmore energy}
    W(\Gamma):=\frac{1}{2}\int_{\Gamma} \kappa^2 \, dA,
\end{equation}
where $\kappa$ is the mean curvature of $\Gamma$, and $dA$ denotes the area element. 

The Willmore flow is a natural approach to study the Willmore energy and its corresponding Willmore surfaces. It can be regarded as the $L^2$-gradient flow of the Willmore energy $W(\Gamma)$ \cite{dziuk2002evolution,Barrett08}, which dissipates the Willmore energy and drives a surface evolving towards the Willmore surface. The mathematical formulation of the Willmore flow for an evolving closed planar curve $\Gamma:=\Gamma(t)\subset\mathbb{R}^2$ is given as \cite{barrett2020parametric}:
\begin{equation}\label{eq: def of willmore flow}
    V=\partial_{ss}\kappa +\frac{1}{2} \kappa^3,
\end{equation} 
where $V$ denotes the normal velocity, $s$ is the arc length parameter, and $\partial_s$ represents the arc length derivative with respect to $\Gamma$. It is a fourth-order, highly nonlinear geometric partial differential equation. And its solution satisfies the following energy dissipative property \cite{barrett2020parametric}:
\begin{equation}\label{eq: energy dissipation, continuous}
  \frac{d W(\Gamma(t))}{dt} = -\int_{\Gamma(t)} V^2\, dA\quad\Longrightarrow \quad   W(\Gamma(t))\leq W(\Gamma(t'))\leq W(\Gamma(0)), \qquad  \forall t\geq t'\geq 0.
\end{equation}
Theoretical results of the Willmore energy and the corresponding Willmore flow can be found in \cite{langer1984total,langer1985curve,simon1993existence,kuwert2002gradient,bauer2003existence,
dall2014willmore,wen1995curve} and references therein. %For numerical approximations, it is desirable to preserve the energy dissipation \eqref{eq: energy dissipation, continuous} at the full-discretized level. 

Based on different descriptions of the evolving curve $\Gamma(t)$, many different numerical schemes have been proposed and analyzed for simulating the Willmore flow.  For implicitly captured curves, Droske and Rumpf proposed a level set approach \cite{rumpf2004level}, and the phase-field approach was developed by Du et al. \cite{du2005phase}. For explicit curves, Mayer and Simonett first proposed a finite difference scheme for axially symmetric surfaces \cite{mayer2002numerical}, which provides numerical evidence for the formation of singularities in the Willmore flow. Using the graph formulation, where $\Gamma(t)$ is represented as a height function, Deckelnick and Dziuk first proposed a convergent finite element method with $C^0$ finite elements \cite{dziuk2006error}, and later with $C^1$ finite elements \cite{deckelnick2015C1}. For the Willmore flow of curves in arbitrary codimensions, Dziuk, Kuwert, and Schatzle proposed a finite element method and proved the long-term existence of its solutions \cite{dziuk2002evolution}. For additional related works, we refer to \cite{bretin2015phase,bobenko2005discrete,Deckelnick05,barrett2020parametric} and the references therein. 

Compared to the aforementioned methods, the parametric methods offer a more direct and efficient representation of $\Gamma(t)$ in the computation of the Willmore flow. Rusu first proposed a linear finite element approach \cite{rusu2005algorithm} based on a mixed variational formulation by introducing a mean curvature vector. In \cite{kovacs2021convergent}, Kov{\'a}cs, Li, and Lubich applied the evolving surface finite element method to the Willmore flow and proved that this method admits a convergent analysis of higher order finite elements. Barrett, Garcke, and N{\"u}rnberg developed a parametric finite element method (PFEM) \cite{Barrett08} with a special treatment of the mean curvature, which ensures a good mesh quality. However, parametric methods often involve complex geometric identities to handle mean curvature. Due to the high nonlinearity of the Willmore energy, proving the energy stability of parametric methods is notoriously challenging, even at the semi-discrete level.

A breakthrough was achieved by Dziuk, who proposed an essentially new parametric method in \cite{dziuk2008computational} that is capable of proving the energy dissipation in its semi-discretization. Barrett, Garcke, and N{\"u}rnberg highlighted that the new geometric partial differential equation (PDE) for the Willmore flow, introduced in Dziuk's parametric method, enables the proof of energy stability. Building on this understanding, they developed a new parametric approach \cite{barrett2016computational} that not only preserves energy dissipation in semi-discretization but also exhibits good mesh properties. Furthermore, they extended their parametric approach to other geometric flows related to the Willmore energy \cite{barrett2017stable}. The adaptation of Dziuk's geometric PDE enables these extensions to be energy-stable at the semi-discretized level. In fact, the energy stability of all the aforementioned parametric 
methods can only be established in the semi-discretization in space. Unfortunately, the energy stability cannot be
extended to full-discretization. To our best knowledge, the energy stability of the parametric numerical schemes for the Willmore flow at the full-discretized level remains open, i.e. no existing parametric numerical scheme is being proved mathematically energy-stable at the full-discretized level. 

The objective of this paper is to develop an energy-stable parametric finite element method for the Willmore flow of planar curves, which can be proved to be unconditionally energy-stable at the fully-discretized level.
The key idea is to find a new and proper geometric PDE for the Willmore flow \eqref{eq: def of willmore flow}. The new geometric PDE is composed of two parts -- the first part is a mixed differential-algebraic system for the evolving planar curve $\Gamma(t)$ and the normal velocity $V$; and the second part is a new evolution equation for the mean curvature $\kappa$. The first part is motivated by our work on surface diffusion \cite{li2021energy,bao2023symmetrized} and related geometric flows \cite{bao2022structure}. We discover that, for analyzing stability, especially at the fully-discretized level, a weak vector equation for $V \boldsymbol{n}$ (coupling $V$ with $\boldsymbol{n}$) is more effective than a strong scalar equation for the normal velocity $V$. The motivation for the second part comes from Kov{\'a}cs, Li, and Lubich's work on the mean curvature flow \cite{kovacs2019convergent} and the Willmore flow \cite{kovacs2021convergent}. These works suggest treating the mean curvature $\kappa$ as an independently evolving unknown.

The numerical scheme and the stability analysis are motivated by the PFEMs developed by Barrett, Garcke, and N{\"u}rnberg \cite{Barrett07,Barrett08,barrett2020parametric}, as well as our own recent development in PFEMs \cite{li2021energy,bao2021structure,bao2023symmetrized}. We have realized that, in addition to achieving good mesh properties, the full discretization using PFEM typically results in a conformal discretization of the original geometric PDE. This means that the full discretization inherits the energy dissipation property of the continuous/variational equation.

The paper is organized as follows. In Section 2, we derive a variational formulation of the Willmore flow by introducing our new geometric PDE, and then prove its energy stability. Section 3 describes the semi-discretization in space of the weak formulation. We propose the energy-stable PFEM full-discretization and analyze its  stability in Section 4. Section 5 presents extensive numerical results to demonstrate the performance and to validate the stability of our energy-stable PFEM. And finally, we conclude in Section 6.

\section{A new geometric PDE and its variational formulation}

Let $\Gamma:=\Gamma(t)\subset\mathbb{R}^2$ be the evolving planar curve, starting with the initial planar curve $\Gamma(0)=\Gamma_0$. The planar curve $\Gamma(t)$ is parameterized by $\boldsymbol{X}(\mathbb{T}, t)$ as
\begin{equation}\label{eq: def of X}
    \boldsymbol{X}(\cdot, t): \mathbb{T}\to \mathbb{R}^2, \quad (\rho, t)\mapsto \boldsymbol{X}(\rho, t)=(x(\rho, t), y(\rho, t))^T,
\end{equation}
where $\mathbb{T}:=\mathbb{R}/\mathbb{Z}=[0, 1]$ is the periodic unit interval, and $\boldsymbol{X}(\cdot, 0):=\boldsymbol{X}_0(\cdot)$ is a parameterization of the initial planar curve $\Gamma_0$. From this parameterization, we derive the arc length parameter $s$ as $s(\rho, t):=\int_0^{\rho} |\partial_q \boldsymbol{X}| dq$. The arc length derivative $\partial_s$ and the arc length element $ds$ are then defined as $\partial_s :=\frac{1}{|\partial\rho \boldsymbol{X}|} \partial_\rho$ and $ds:=\partial_\rho s \,d\rho = |\partial_\rho \boldsymbol{X}|\, d\rho$, respectively. Moreover, the unit tangent vector $\boldsymbol{\tau}$ and the outward unit normal vector $\boldsymbol{n}$ are determined by
\begin{equation}\label{eq: def of tau and n}
    \boldsymbol{\tau}:=\boldsymbol{\tau}(\rho, t)=\partial_s \boldsymbol{X}(\rho, t)=\frac{\partial_\rho\boldsymbol{X}(\rho, t)}{|\partial_\rho\boldsymbol{X}(\rho, t)|}, \quad \boldsymbol{n}:=\boldsymbol{n}(\rho, t)=-\boldsymbol{\tau}^\perp,
\end{equation}
where $\perp$ denotes the clockwise rotation by $\frac{\pi}{2}$, such that for any vector $\boldsymbol{u}=(u_1, u_2)^T\in \mathbb{R}^2$, $\boldsymbol{u}^{\perp}=(-u_2, u_1)^T$. The curvature $\kappa$ is then given by
\begin{equation}\label{eq: def of kappa 1}
    \kappa:=\kappa(\rho, t)=-\boldsymbol{n}\cdot \partial_{ss}\boldsymbol{X}(\rho, t) = -\boldsymbol{n}\cdot \frac{1}{|\partial_\rho \boldsymbol{X}|}\partial_\rho \left(\frac{\partial_\rho \boldsymbol{X}}{|\partial_\rho \boldsymbol{X}|}\right).
\end{equation}

This parameterization allows us to reformulate the Willmore flow \eqref{eq: def of willmore flow} into a fourth-order geometric PDE for $\boldsymbol{X}(\rho, t)$, as follows:
\begin{subequations}
\label{eq: willmore pde origin}
\begin{align}
\label{eq: willmore pde origin 1}
&\partial_t \boldsymbol{X} = V \boldsymbol{n},\\
\label{eq: willmore pde origin 2}
&V = \partial_{ss} \kappa +\frac{1}{2}\kappa^3,\\
\label{eq: willmore pde origin 3}
&\kappa = -\boldsymbol{n}\cdot \partial_{ss}\boldsymbol{X}, \qquad \forall \rho \in \mathbb{T}, \ t>0,
\end{align}
\end{subequations}
with the initial condition $\boldsymbol{X}(\rho, 0)=\boldsymbol{X}_0(\rho)$ and the outward unit normal $\boldsymbol{n}$ defined as in \eqref{eq: def of tau and n}.

Suppose $\Gamma(t)=\boldsymbol{X}(\mathbb{T}, t)$ is a solution to \eqref{eq: willmore pde origin}. From \eqref{eq: def of willmore energy}, its Willmore energy $W(t)$ at time $t$, is given as
\begin{equation}\label{eq: def of willmore energy, continuous}
    W(t):=W(\Gamma(t))=\int_{\Gamma(t)} \kappa^2 \, ds = \int_0^1 \kappa^2(\rho, t)|\partial_\rho \boldsymbol{X}(\rho, t)|\, d\rho.
\end{equation}

\subsection{A new geometric PDE}
Here we establish two key geometric identities, the first one is specified for the normal velocity $V$ of the Willmore flow, and the second one is a new formulation of $\kappa$, which holds for all geometric flows. These identities play a decisive role in establishing the energy-stable discretizations for the Willmore flow.
\begin{lemma}\label{lem: geo identity for V}
For a solution $\boldsymbol{X}$ of the Willmore flow \eqref{eq: willmore pde origin}, the normal velocity $V$ satisfies the following geometric identity:
\begin{equation}\label{eq: key identity 1}
    V \boldsymbol{n}=\partial_s\left(\partial_s \kappa \boldsymbol{n}-\frac{1}{2}\kappa^2\partial_s \boldsymbol{X}\right).
\end{equation}
\end{lemma}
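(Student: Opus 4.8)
The plan is to verify \eqref{eq: key identity 1} by a direct computation: expand the right-hand side with the product rule for $\partial_s$, and then collapse it using the Frenet--Serret relations for a planar curve together with the Willmore flow equation \eqref{eq: willmore pde origin 2}. The content of the lemma is not the computation itself but the observation that the scalar flux $V$ can be rewritten as an \emph{exact} arc-length derivative of a vector-valued quantity built from $\kappa$, $\boldsymbol{n}$ and $\partial_s\boldsymbol{X}$.

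First I would establish the two Frenet--Serret formulas for the moving frame $\{\boldsymbol{\tau}, \boldsymbol{n}\}$. Differentiating the orthonormality relations $\boldsymbol{\tau}\cdot\boldsymbol{\tau}=1$, $\boldsymbol{n}\cdot\boldsymbol{n}=1$ and $\boldsymbol{\tau}\cdot\boldsymbol{n}=0$ with respect to $s$ shows that $\partial_s\boldsymbol{\tau}=\partial_{ss}\boldsymbol{X}$ is parallel to $\boldsymbol{n}$ and $\partial_s\boldsymbol{n}$ is parallel to $\boldsymbol{\tau}=\partial_s\boldsymbol{X}$. Reading off the coefficients from the definition \eqref{eq: def of kappa 1}, namely $\kappa=-\boldsymbol{n}\cdot\partial_{ss}\boldsymbol{X}$, yields
\begin{equation}
\partial_s\boldsymbol{\tau}=\partial_{ss}\boldsymbol{X}=-\kappa\,\boldsymbol{n},\qquad \partial_s\boldsymbol{n}=\kappa\,\partial_s\boldsymbol{X}.
\end{equation}
Here one must respect the orientation conventions fixed in \eqref{eq: def of tau and n}--\eqref{eq: def of kappa 1}, where $\boldsymbol{n}=-\boldsymbol{\tau}^\perp$ and $\kappa$ carries a minus sign; this is the only place where a sign slip could occur, and it is the single point requiring genuine care.

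Next I would expand the right-hand side of \eqref{eq: key identity 1} by the product rule,
\begin{equation}
\partial_s\!\left(\partial_s\kappa\,\boldsymbol{n}-\frac{1}{2}\kappa^2\,\partial_s\boldsymbol{X}\right)
=\partial_{ss}\kappa\,\boldsymbol{n}+\partial_s\kappa\,\partial_s\boldsymbol{n}
-\kappa\,\partial_s\kappa\,\partial_s\boldsymbol{X}-\frac{1}{2}\kappa^2\,\partial_{ss}\boldsymbol{X},
\end{equation}
and substitute the two frame identities. The term $\partial_s\kappa\,\partial_s\boldsymbol{n}=\kappa\,\partial_s\kappa\,\partial_s\boldsymbol{X}$ then cancels exactly against the tangential term $-\kappa\,\partial_s\kappa\,\partial_s\boldsymbol{X}$, while $-\frac{1}{2}\kappa^2\,\partial_{ss}\boldsymbol{X}=\frac{1}{2}\kappa^3\,\boldsymbol{n}$. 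What remains is purely normal, $\left(\partial_{ss}\kappa+\frac{1}{2}\kappa^3\right)\boldsymbol{n}$, which equals $V\boldsymbol{n}$ by \eqref{eq: willmore pde origin 2}.

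Since the cancellation of the tangential contributions is automatic once the frame equations are in hand, I do not anticipate a serious obstacle; the calculation is mechanical apart from the sign bookkeeping noted above. The value of the identity is structural rather than technical: because the right-hand side is an exact arc-length divergence, testing \eqref{eq: key identity 1} against a test function and integrating by parts over the closed curve produces no uncontrolled curvature boundary terms and distributes one derivative onto the test function in a symmetric way. This is precisely the feature that will later be exploited to design the fully-discrete, unconditionally energy-stable scheme announced in the abstract.
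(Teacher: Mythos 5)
Your proposal is correct and follows essentially the same route as the paper's proof: derive $\partial_s\boldsymbol{n}=\kappa\,\partial_s\boldsymbol{X}$ and $\partial_{ss}\boldsymbol{X}=-\kappa\,\boldsymbol{n}$ from the orthonormality of the frame and the sign convention in \eqref{eq: def of kappa 1}, expand the right-hand side by the product rule, cancel the tangential terms, and identify the remaining normal part with $V\boldsymbol{n}$ via \eqref{eq: willmore pde origin 2}. The sign bookkeeping you flag is handled correctly.
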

\begin{proof}
Since $\boldsymbol{\tau}$ is the unit tangent and $\boldsymbol{n}$ is the unit normal, we know that $\boldsymbol{n}\cdot \boldsymbol{n}=1, \boldsymbol{\tau}\cdot \boldsymbol{n}=0$ hold for all $\rho \in \mathbb{T}$. Consequently, we have
\begin{equation}\label{eq: lem1, aux 1}
    \boldsymbol{n}\cdot \partial_s \boldsymbol{n}=0, \quad \boldsymbol{n}\cdot \partial_s \boldsymbol{\tau}+\boldsymbol{\tau}\cdot \partial_s \boldsymbol{n} = 0.
\end{equation}
Moreover, applying vector decomposition to $\partial_s \boldsymbol{n}$ on the orthonormal basis ${\boldsymbol{\tau}, \boldsymbol{n}}$, together with \eqref{eq: lem1, aux 1} and the definition of the curvature $\kappa$ in  \eqref{eq: def of kappa 1}, we find that
\begin{equation}\label{eq: lem1, aux 2}
    \partial_s \boldsymbol{n} = (\boldsymbol{n}\cdot \partial_s \boldsymbol{n})\boldsymbol{n} + (\boldsymbol{\tau}\cdot \partial_s \boldsymbol{n})\boldsymbol{\tau}=-(\boldsymbol{n}\cdot \partial_s\boldsymbol{\tau})\boldsymbol{\tau}=\kappa\, \partial_s \boldsymbol{X}.
\end{equation}
We can therefore simplify the right-hand side of \eqref{eq: key identity 1} as
\begin{align}\label{eq: lem1, aux 3}
    &\partial_s\left(\partial_s \kappa \,\boldsymbol{n}-\frac{1}{2}\kappa^2\,\partial_s \boldsymbol{X}\right)\nonumber\\
    &=\partial_{ss}\kappa\, \boldsymbol{n}+\partial_s\kappa\, \partial_s \boldsymbol{n}-\kappa\, \partial_s \kappa \, \partial_s \boldsymbol{X}-\frac{1}{2}\kappa^2\, \partial_{ss}\boldsymbol{X}\nonumber\\
    &=\partial_{ss}\kappa\, \boldsymbol{n}+\partial_s\kappa \,(\kappa \,\partial_s\boldsymbol{X})-\kappa\, \partial_s\kappa\, \partial_s\boldsymbol{X} -\frac{1}{2}\kappa^2\,(-\kappa\boldsymbol{n})\nonumber\\
    &=\left(\partial_{ss}\kappa+\frac{1}{2} \kappa^3\right)\boldsymbol{n}.
\end{align}
This, together with \eqref{eq: willmore pde origin 2} implies the desired geometric identity \eqref{eq: key identity 1}. 
\end{proof}

Next, we examine the time derivative of the curvature $\kappa$ in the following lemma.

\begin{lemma}\label{lem: geo identity for kappa}
For the time derivative of the curvature $\kappa$, we have the following geometric identity:
\begin{equation}\label{eq: key identity 2}
    \partial_t \kappa = -\partial_{s}(\boldsymbol{n}\cdot \partial_s\left(\partial_t \boldsymbol{X}\right))-(\partial_s \boldsymbol{X}\cdot \partial_s\left(\partial_t \boldsymbol{X}\right))\, \kappa.
\end{equation}
\end{lemma}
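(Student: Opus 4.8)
The plan is to exploit the fact that the arc-length derivative $\partial_s = |\partial_\rho\boldsymbol{X}|^{-1}\partial_\rho$ carries a time-dependent metric factor, so that $\partial_t$ and $\partial_s$ do not commute; this non-commutativity is precisely what will generate the second term in \eqref{eq: key identity 2}. First I would establish the commutator identity
\begin{equation*}
[\partial_t,\partial_s]\,f = \partial_t\partial_s f - \partial_s\partial_t f = -\bigl(\partial_s\boldsymbol{X}\cdot\partial_s(\partial_t\boldsymbol{X})\bigr)\,\partial_s f,
\end{equation*}
by writing $\partial_s = |\partial_\rho\boldsymbol{X}|^{-1}\partial_\rho$, differentiating the metric $g:=|\partial_\rho\boldsymbol{X}|$ in time via $g\,\partial_t g = \partial_\rho\boldsymbol{X}\cdot\partial_\rho(\partial_t\boldsymbol{X})$, and observing that $\partial_t g/g = \partial_s\boldsymbol{X}\cdot\partial_s(\partial_t\boldsymbol{X})$. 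Note that $\partial_\rho$ and $\partial_t$ \emph{do} commute, since $\rho$ and $t$ are independent variables.

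Second, I would differentiate the orthonormality relations in time. Because $\boldsymbol{\tau}\cdot\boldsymbol{\tau}=1$, $\boldsymbol{n}\cdot\boldsymbol{n}=1$, and $\boldsymbol{\tau}\cdot\boldsymbol{n}=0$, the vectors $\partial_t\boldsymbol{\tau}$ and $\partial_t\boldsymbol{n}$ are each determined by a single scalar. Applying the commutator to $\boldsymbol{\tau}=\partial_s\boldsymbol{X}$ gives $\partial_t\boldsymbol{\tau} = \partial_s(\partial_t\boldsymbol{X}) - (\boldsymbol{\tau}\cdot\partial_s(\partial_t\boldsymbol{X}))\boldsymbol{\tau} = (\boldsymbol{n}\cdot\partial_s(\partial_t\boldsymbol{X}))\boldsymbol{n}$, and then, from $\partial_t(\boldsymbol{\tau}\cdot\boldsymbol{n})=0$, I obtain $\partial_t\boldsymbol{n} = -(\boldsymbol{n}\cdot\partial_s(\partial_t\boldsymbol{X}))\boldsymbol{\tau}$.

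Third, I would differentiate a convenient closed form of the curvature in time. Using the Frenet-type relations $\partial_s\boldsymbol{\tau} = \partial_{ss}\boldsymbol{X} = -\kappa\boldsymbol{n}$ and $\partial_s\boldsymbol{n} = \kappa\,\partial_s\boldsymbol{X}$ (the latter being \eqref{eq: lem1, aux 2}), I would write $\kappa = \partial_s\boldsymbol{n}\cdot\boldsymbol{\tau}$ (equivalently $\kappa = -\boldsymbol{n}\cdot\partial_{ss}\boldsymbol{X}$) and apply $\partial_t$. Here I insert the commutator once more to replace $\partial_t\partial_s$ by $\partial_s\partial_t$ plus the correction term, and then substitute the expressions for $\partial_t\boldsymbol{\tau}$ and $\partial_t\boldsymbol{n}$ from the previous step. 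After projecting onto $\boldsymbol{\tau}$ and $\boldsymbol{n}$ and discarding the terms that vanish by orthogonality, the surviving contributions are exactly $-\partial_s(\boldsymbol{n}\cdot\partial_s(\partial_t\boldsymbol{X}))$ coming from the $\partial_s\partial_t$ piece and $-(\partial_s\boldsymbol{X}\cdot\partial_s(\partial_t\boldsymbol{X}))\,\kappa$ coming from the commutator correction, which together are \eqref{eq: key identity 2}.

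The main obstacle is bookkeeping the non-commutativity correctly: the \emph{entire} second term of the identity originates from the commutator, so the difficulty is not any single computation but tracking the time-dependence of $\partial_s$ consistently everywhere it appears (explicitly when differentiating $\partial_s\boldsymbol{n}$ or $\partial_{ss}\boldsymbol{X}$, and implicitly in the derivation of $\partial_t\boldsymbol{\tau}$). A secondary check is confirming that the cross terms proportional to $\boldsymbol{\tau}\cdot\boldsymbol{n}$ genuinely cancel; this is what makes the final expression depend only on the normal projection $\boldsymbol{n}\cdot\partial_s(\partial_t\boldsymbol{X})$ and the tangential projection $\partial_s\boldsymbol{X}\cdot\partial_s(\partial_t\boldsymbol{X})$.
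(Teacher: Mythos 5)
Your proposal is correct and follows essentially the same route as the paper: both hinge on the commutator $[\partial_t,\partial_s]=-(\partial_s\boldsymbol{X}\cdot\partial_s(\partial_t\boldsymbol{X}))\partial_s$ obtained from the time derivative of the metric $|\partial_\rho\boldsymbol{X}|$, derive $\partial_t\boldsymbol{\tau}=(\boldsymbol{n}\cdot\partial_s(\partial_t\boldsymbol{X}))\boldsymbol{n}$ from it, and then time-differentiate $\kappa=-\boldsymbol{n}\cdot\partial_{ss}\boldsymbol{X}$ and project onto $\boldsymbol{n}$. The only cosmetic difference is that you compute $\partial_t\boldsymbol{n}$ explicitly, whereas the paper only needs $\boldsymbol{n}\cdot\partial_t\boldsymbol{n}=0$; the substance is the same.
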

\begin{proof}We begin by applying vector decomposition to $\partial_{ss}\boldsymbol{X}$ and considering the identity  $0=\boldsymbol{\tau}\cdot \partial_s\boldsymbol{\tau}=\boldsymbol{\tau}\cdot \partial_{ss}\boldsymbol{X}$ from \eqref{eq: lem1, aux 1} along with the definition of $\kappa$ in \eqref{eq: def of kappa 1}. This leads to
\begin{equation}\label{eq: lem2, aux 0}
    \partial_{ss}\boldsymbol{X} = (\boldsymbol{n}\cdot \partial_{ss}\boldsymbol{X})\boldsymbol{n}+(\boldsymbol{\tau}\cdot \partial_{ss}\boldsymbol{X})\boldsymbol{\tau}=-\kappa\, \boldsymbol{n}.
\end{equation}
Next, we consider the time derivative of $|\partial_\rho \boldsymbol{X}|=\sqrt{(\partial_\rho x)^2+(\partial_\rho y)^2}$. Utilizing the chain rule, we get
\begin{align}\label{eq: lem2, aux 1}
\partial_t (|\partial_\rho \boldsymbol{X}|) &= \frac{\partial_t(\partial_\rho x)\partial_\rho x+\partial_t(\partial_\rho y)\partial_\rho y}{\sqrt{(\partial_\rho x)^2+(\partial_\rho y)^2}}\nonumber\\
&=|\partial_\rho \boldsymbol{X}|\,\frac{\partial_\rho \boldsymbol{X}}{|\partial_\rho \boldsymbol{X}|}\cdot\frac{\partial_\rho (\partial_t \boldsymbol{X})}{|\partial_\rho \boldsymbol{X}|} \nonumber\\
&=|\partial_\rho \boldsymbol{X}|\,\partial_s\boldsymbol{X}\cdot \partial_s (\partial_t \boldsymbol{X}).
\end{align}
The commutator of $\partial_t$ and $\partial_s$ can be thus simplified as
\begin{align}\label{eq: lem2, aux 2}
\partial_t(\partial_s) - \partial_s (\partial_t) &= \partial_t \left(\frac{\partial_\rho}{|\partial_\rho \boldsymbol{X}|}\right)-  \frac{\partial_\rho(\partial_t)}{|\partial_\rho \boldsymbol{X}|}\nonumber\\
&=\frac{|\partial_\rho \boldsymbol{X}|\partial_t(\partial_\rho) - \partial_t (|\partial_\rho \boldsymbol{X}|)\partial_\rho}{|\partial_\rho \boldsymbol{X}|^2}-  \frac{\partial_\rho(\partial_t)}{|\partial_\rho \boldsymbol{X}|}\nonumber\\
&=-\frac{|\partial_\rho \boldsymbol{X}|\,\partial_s\boldsymbol{X}\cdot \partial_s (\partial_t \boldsymbol{X})}{|\partial_\rho \boldsymbol{X}|}\frac{\partial_\rho}{|\partial_\rho \boldsymbol{X}|}\nonumber\\
&=-\left(\partial_s\boldsymbol{X}\cdot \partial_s (\partial_t \boldsymbol{X})\right)\partial_s\,.
\end{align}

Applying \eqref{eq: lem2, aux 2} to $\boldsymbol{X}$ and using vector decomposition, we derive that
\begin{equation}\label{eq: lem2, aux 3}
    \partial_t(\partial_s \boldsymbol{X})=\partial_s(\partial_t \boldsymbol{X})-(\partial_s \boldsymbol{X}\cdot \partial_s(\partial_t \boldsymbol{X}))\partial_s \boldsymbol{X} = (\boldsymbol{n}\cdot \partial_s (\partial_t \boldsymbol{X}))\boldsymbol{n}.
\end{equation}
Applying \eqref{eq: lem2, aux 2} to $\partial_s\boldsymbol{X}$, together with \eqref{eq: def of kappa 1} and \eqref{eq: lem2, aux 3}, yields that
\begin{align}\label{eq: lem2, aux 4}
\partial_t (\partial_{ss}\boldsymbol{X})&=\partial_s(\partial_t (\partial_s \boldsymbol{X}))-\left(\partial_s\boldsymbol{X}\cdot \partial_s (\partial_t \boldsymbol{X})\right)\partial_s(\partial_s \boldsymbol{X})\nonumber\\
&=\partial_s((\boldsymbol{n}\cdot \partial_s (\partial_t \boldsymbol{X}))\boldsymbol{n})+\left(\partial_s\boldsymbol{X}\cdot \partial_s (\partial_t \boldsymbol{X})\right)\kappa\boldsymbol{n}\nonumber\\
&=\Big(\partial_s(\boldsymbol{n}\cdot \partial_s (\partial_t \boldsymbol{X}))+\left(\partial_s\boldsymbol{X}\cdot \partial_s (\partial_t \boldsymbol{X})\right)\kappa\Big)\boldsymbol{n}
+ (\boldsymbol{n}\cdot \partial_s (\partial_t \boldsymbol{X})) \partial_s \boldsymbol{n}.
\end{align}

Finally, combining the definition of $\kappa$ in \eqref{eq: def of kappa 1}, identities \eqref{eq: lem2, aux 0} and \eqref{eq: lem2, aux 4}, and the facts $\boldsymbol{n}\cdot \partial_s \boldsymbol{n}=0, \boldsymbol{n}\cdot \partial_t \boldsymbol{n}=0$, we derive $\partial_t \kappa$ as follows:
\begin{align}
\partial_t \kappa&=\boldsymbol{n}\cdot\partial_t (\kappa\, \boldsymbol{n})- \boldsymbol{n}\cdot (\kappa\partial_t \boldsymbol{n})\nonumber\\
&=\boldsymbol{n}\cdot \partial_t(-\partial_{ss}\boldsymbol{X})\nonumber\\
&=-\boldsymbol{n}\cdot \Bigl[\Bigl(\partial_s(\boldsymbol{n}\cdot \partial_s (\partial_t \boldsymbol{X}))+\left(\partial_s\boldsymbol{X}\cdot \partial_s (\partial_t \boldsymbol{X})\right)\kappa\Bigr)\boldsymbol{n}
+ (\boldsymbol{n}\cdot \partial_s (\partial_t \boldsymbol{X})) \partial_s \boldsymbol{n}\Bigr]\nonumber\\
&=-\partial_s(\boldsymbol{n}\cdot \partial_s (\partial_t \boldsymbol{X}))-\left(\partial_s\boldsymbol{X}\cdot \partial_s (\partial_t \boldsymbol{X})\right)\kappa,
\end{align}
which validates \eqref{eq: key identity 2}.
\end{proof}

Let $\boldsymbol{X}(\rho, t)$ be the solution of the geometric PDE of the Willmore flow \eqref{eq: willmore pde origin}, and $V(\rho, t), \kappa(\rho, t)$ are determined via \eqref{eq: willmore pde origin 2} and \eqref{eq: willmore pde origin 3}. We rewrite the first two equations, \eqref{eq: willmore pde origin 1} and \eqref{eq: willmore pde origin 2} into the following differential-algebraic form:
\begin{equation}
    \boldsymbol{n}\cdot \partial_t\boldsymbol{X} = V, \quad V\boldsymbol{n} = (\partial_{ss}\kappa +\frac{1}{2}\kappa^3)\boldsymbol{n}.
\end{equation}
Then by applying Lemma \ref{lem: geo identity for V} and Lemma \ref{lem: geo identity for kappa}, we find that the solution $(\boldsymbol{X}(\rho, t), V(\rho, t), \kappa(\rho, t))$ of the original geometric PDE \eqref{eq: willmore pde origin} also satisfies the following new geometric PDE:
\begin{subequations}
\label{eq: willmore pde new}
\begin{align}
\label{eq: willmore pde new 1}
&\boldsymbol{n}\cdot \partial_t \boldsymbol{X} = V ,\\
\label{eq: willmore pde new 2}
&V\boldsymbol{n} = \partial_s\left(\partial_s \kappa \boldsymbol{n}-\frac{1}{2}\kappa^2\partial_s \boldsymbol{X}\right), \quad \forall \rho \in \mathbb{T}, \, t>0,\\
\label{eq: willmore pde new 3}
&\partial_t\kappa = -\partial_{s}(\boldsymbol{n}\cdot \partial_s\left(\partial_t \boldsymbol{X}\right))-(\partial_s \boldsymbol{X}\cdot \partial_s\left(\partial_t \boldsymbol{X}\right))\, \kappa, 
\end{align}
\end{subequations}
with the initial conditions 
\begin{equation}\label{initpDE3}
\boldsymbol{X}(\rho, 0)=\boldsymbol{X}_0(\rho),\quad
\kappa(\rho, 0)=-\boldsymbol{n}\cdot \partial_{ss}\boldsymbol{X}_0(\rho):=\kappa_0(\rho),
\quad  V(\rho, 0)=\partial_{ss}\kappa(\rho, 0)+\frac{1}{2}\kappa(\rho, 0)^3:=V_0(\rho).
\end{equation}

\begin{remark}
At the continuous level, the time derivative of the curvature, i.e. $\partial_t\kappa$, can be expressed in multiple equivalent formulations. For example, by applying $\partial_t\boldsymbol{X} = V \boldsymbol{n}$ in our formulation \eqref{eq: key identity 2}, and considering the identity $\partial_s \boldsymbol{n}=\kappa \partial_s \boldsymbol{X}$ from \eqref{eq: lem1, aux 2}, we obtain:
\begin{align}\label{eq: huisken kappa}
    \partial_t \kappa &=-\partial_{s}(\boldsymbol{n}\cdot \partial_s\left(V \boldsymbol{n}\right))-(\partial_s \boldsymbol{X}\cdot \partial_s\left(V \boldsymbol{n}\right))\, \kappa\nonumber\\
    &=-\partial_{ss}V-(\partial_s \boldsymbol{X}\cdot \partial_s \boldsymbol{n})\kappa V\nonumber\\
    &=-(\partial_{ss}+\kappa^2) V.
\end{align}
This reveals that our formulation \eqref{eq: key identity 2} is equivalent to the renowned and elegant Huisken's identity for $\partial_t \kappa$ at:  $\partial_t \kappa = -(\partial_{ss}+\kappa^2)V$ \cite{huisken1984flow}. In fact, by replacing our formulation \eqref{eq: key identity 2} with the  Huisken's identity \eqref{eq: huisken kappa} in \eqref{eq: willmore pde new}, we can derive another equivalent but different geometric PDE. Therefore, a geometric flow can be represented by various geometric PDEs.

However, it's crucial to note that although these formulations are equivalent at the continuous level, their resulting numerical schemes can vary significantly at the discretized level. This difference becomes particularly significant in the energy stability analysis --- among all the equivalent formulations we have explored, ours \eqref{eq: willmore pde new} is the only one that successful in proving the energy stability at the full-discretized level. This distinction not only highlights the significance of choosing the appropriate geometric identity  but also emphasizes the importance and potential of our formulation \eqref{eq: key identity 2} and our new geometric PDE \eqref{eq: willmore pde new}.
\end{remark}

\subsection{A new variational formulation and its energy stability}
To introduce the variational formulation for \eqref{eq: willmore pde new}, let's first define the necessary functional spaces. We introduce the space $L^2(\Gamma(t))$ as
\begin{equation}
    L^2(\Gamma(t)):=\Bigl\{u: \Gamma(t)\to \mathbb{R}\, \Big| \int_{\Gamma(t)} |u|^2 ds<\infty\Bigr\},
\end{equation}
equipped with the weighted inner product $(\cdot, \cdot)_{\Gamma(t)}$ defined by
\begin{equation}
    \left(u, v\right)_{\Gamma(t)}:=\int_{\Gamma(t)}u\, v\, ds, \qquad \forall u, v \in L^2(\Gamma(t)).
\end{equation}
This definition is naturally extended to $[L^2(\Gamma(t))]^2$. Moreover, we define the Sobolev space $H^1(\Gamma(t))$ as
\begin{equation}
    H^1(\Gamma(t)):=\Bigl\{u: \Gamma(t)\to \mathbb{R}\, \Big| u \in L^2(\Gamma(t)), \partial_s u\in L^2(\Gamma(t))\Bigr\}.
\end{equation}
The extension to $[H^1(\Gamma(t))]^2$ follows straightforwardly.

We multiply test functions $\phi \in L^2(\Gamma(t))$, $\boldsymbol{\omega} \in [H^1(\Gamma(t))]^2$, and $\psi \in H^1(\Gamma(t))$ in equations \eqref{eq: willmore pde new 1}-\eqref{eq: willmore pde new 3}, respectively,  integrate over $\Gamma(t)$, and apply integration by parts, respectively. This leads to the following variational formulation of \eqref{eq: willmore pde new}: Given the initial curve $\Gamma_0$ parameterized by $\boldsymbol{X}_0$, i.e. $\boldsymbol{X}(\cdot, 0)=\boldsymbol{X}_0$, $\kappa(\rho,0)=\kappa_0(\rho)$ and $V(\rho,0)=V_0(\rho)$ in 
\eqref{initpDE3}, for any $t\geq 0$, find the solution  $(\boldsymbol{X}(\cdot, t), V(\cdot, t), \kappa(\cdot, t))\in [H^1(\Gamma(t))]^2\times L^2(\Gamma(t))\times H^1(\Gamma(t))$ such that
\begin{subequations}
\label{eq: willmore weak}
\begin{align}
\label{eq: willmore weak 1}
&\Bigl(\boldsymbol{n}\cdot \partial_t\boldsymbol{X},\, \phi\Bigr)_{\Gamma(t)} = \Bigl(V,\,\phi\Bigr)_{\Gamma(t)}, \quad \forall \phi \in L^2(\Gamma(t)),\\[0.5em]
\label{eq: willmore weak 2}
&\Bigl(V\boldsymbol{n},\,\boldsymbol{\omega}\Bigr)_{\Gamma(t)}=\Bigl(-\partial_s \kappa\, \boldsymbol{n}+\frac{1}{2}\kappa^2\partial_s \boldsymbol{X},\, \partial_s\boldsymbol{\omega}\Bigr)_{\Gamma(t)}, \quad\forall \boldsymbol{\omega} \in [H^1(\Gamma(t))]^2,\\[0.5em]
\label{eq: willmore weak 3}
&\Bigl(\partial_t \kappa,\,\psi\Bigr)_{\Gamma(t)} = \Bigl(\boldsymbol{n}\cdot \partial_s\left(\partial_t \boldsymbol{X}\right),\, \partial_s \psi\Bigr)_{\Gamma(t)}
-\Bigl((\partial_s \boldsymbol{X}\cdot \partial_s\left(\partial_t \boldsymbol{X}\right))\, \kappa,\psi\Bigr)_{\Gamma(t)}, \quad \forall \psi \in H^1(\Gamma(t)),
\end{align}
where $\boldsymbol{n}=-(\partial_s \boldsymbol{X})^\perp$. 
\end{subequations}

For the new variational formulation \eqref{eq: willmore weak}, we have
\begin{theorem}[Energy dissipation]\label{thm: energy, continuous}Let $(\boldsymbol{X}(\cdot, t), V(\cdot, t), \kappa(\cdot, t))\in [H^1(\Gamma(t))]^2\times L^2(\Gamma(t))\times H^1(\Gamma(t))$ be a solution of the variational problem \eqref{eq: willmore weak} with the initial curve $\Gamma_0$. Then the Willmore energy of $\Gamma(t)$, as defined in \eqref{eq: def of willmore energy, continuous}, is decreasing over time, i.e.,
\begin{equation}\label{eq: energy dissipation, weak}
    W(t)\leq W(t')\leq W(0)=\int_{\Gamma_0}\kappa^2 \, ds, \quad \forall t\geq t'\geq 0.
\end{equation}
\end{theorem}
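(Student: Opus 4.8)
The plan is to verify the gradient-flow structure directly: differentiate the parametric expression for the energy in \eqref{eq: def of willmore energy, continuous} and show that, after substituting the three variational equations with judiciously chosen test functions, the result collapses to $-2\,(V,V)_{\Gamma(t)}\le 0$. Once $\frac{d}{dt}W(t)\le 0$ is established, the chain of inequalities \eqref{eq: energy dissipation, weak} follows by integrating in time over $[t',t]$ and setting $t'=0$.

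First I would differentiate $W(t)=\int_0^1\kappa^2\,|\partial_\rho\boldsymbol{X}|\,d\rho$ under the (fixed) parameter integral, producing two contributions: a curvature term $2\kappa\,\partial_t\kappa$ and a metric-stretching term $\kappa^2\,\partial_t(|\partial_\rho\boldsymbol{X}|)$. For the latter I would invoke the identity \eqref{eq: lem2, aux 1} established in the proof of Lemma~\ref{lem: geo identity for kappa}, namely $\partial_t(|\partial_\rho\boldsymbol{X}|)=|\partial_\rho\boldsymbol{X}|\,\partial_s\boldsymbol{X}\cdot\partial_s(\partial_t\boldsymbol{X})$, so that everything becomes an integral over $\Gamma(t)$:
\begin{equation*}
\frac{d}{dt}W(t)=\int_{\Gamma(t)}\Bigl(2\kappa\,\partial_t\kappa+\kappa^2\,\partial_s\boldsymbol{X}\cdot\partial_s(\partial_t\boldsymbol{X})\Bigr)\,ds.
\end{equation*}
Next I would eliminate $\partial_t\kappa$ by testing the curvature equation \eqref{eq: willmore weak 3} with $\psi=2\kappa\in H^1(\Gamma(t))$; this replaces $\int_{\Gamma(t)}2\kappa\,\partial_t\kappa\,ds$ by $2\,(\boldsymbol{n}\cdot\partial_s(\partial_t\boldsymbol{X}),\partial_s\kappa)_{\Gamma(t)}-2\int_{\Gamma(t)}(\partial_s\boldsymbol{X}\cdot\partial_s(\partial_t\boldsymbol{X}))\,\kappa^2\,ds$. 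Combining with the metric-stretching term, one of the two $\kappa^2(\partial_s\boldsymbol{X}\cdot\partial_s(\partial_t\boldsymbol{X}))$ contributions cancels, leaving
\begin{equation*}
\frac{d}{dt}W(t)=2\int_{\Gamma(t)}\partial_s\kappa\,(\boldsymbol{n}\cdot\partial_s(\partial_t\boldsymbol{X}))\,ds-\int_{\Gamma(t)}\kappa^2\,(\partial_s\boldsymbol{X}\cdot\partial_s(\partial_t\boldsymbol{X}))\,ds.
\end{equation*}
The crucial observation is that this expression equals exactly $-2$ times the right-hand side of \eqref{eq: willmore weak 2} evaluated at the test function $\boldsymbol{\omega}=\partial_t\boldsymbol{X}\in[H^1(\Gamma(t))]^2$. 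I therefore replace that right-hand side by the left-hand side of \eqref{eq: willmore weak 2}, obtaining $\frac{d}{dt}W(t)=-2\,(V\boldsymbol{n},\partial_t\boldsymbol{X})_{\Gamma(t)}$. Finally, writing $(V\boldsymbol{n},\partial_t\boldsymbol{X})_{\Gamma(t)}=(V,\boldsymbol{n}\cdot\partial_t\boldsymbol{X})_{\Gamma(t)}$ and applying \eqref{eq: willmore weak 1} with $\phi=V\in L^2(\Gamma(t))$ gives $(V,\boldsymbol{n}\cdot\partial_t\boldsymbol{X})_{\Gamma(t)}=(V,V)_{\Gamma(t)}$, so that $\frac{d}{dt}W(t)=-2\int_{\Gamma(t)}V^2\,ds\le0$.

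The computation is essentially algebraic bookkeeping of signs and of the factor $\tfrac12$ in \eqref{eq: willmore weak 2}, and the only genuine subtlety — indeed the whole point of the reformulation \eqref{eq: willmore pde new} — is that the three weak equations pair up perfectly under the test functions $\psi=2\kappa$, $\boldsymbol{\omega}=\partial_t\boldsymbol{X}$, and $\phi=V$. The main thing to watch is the admissibility of these test functions, i.e. the regularity $\kappa\in H^1(\Gamma(t))$ (part of the solution class) and $\partial_t\boldsymbol{X}\in[H^1(\Gamma(t))]^2$, which I would assume as a regularity hypothesis on the solution. No boundary terms arise in any integration by parts since $\Gamma(t)$ is a closed (periodic) curve. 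With $\frac{d}{dt}W(t)\le0$ in hand, integrating over $[t',t]$ and then taking $t'=0$ yields the full statement \eqref{eq: energy dissipation, weak}.
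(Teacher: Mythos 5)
Your proof is correct and follows essentially the same route as the paper's: differentiate the parametrized energy using the metric identity \eqref{eq: lem2, aux 1}, then combine the three weak equations with the test functions $\phi\propto V$, $\boldsymbol{\omega}=\partial_t\boldsymbol{X}$, $\psi\propto\kappa$ so that the tangential-stretching terms cancel and only $-(V,V)_{\Gamma(t)}$ (up to an overall constant) survives. The factor of $2$ separating your $\frac{d}{dt}W=-2(V,V)_{\Gamma(t)}$ from the paper's $-(V,V)_{\Gamma(t)}$ merely reflects the paper's own inconsistency about whether the $\tfrac12$ appears in \eqref{eq: def of willmore energy, continuous}, and does not affect the conclusion.
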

\begin{proof}
We start by differentiating $W(t)$ with respect to time $t$. Recalling \eqref{eq: lem2, aux 1}, we can formulate this derivative as follows
\begin{align}\label{eq: thm1, aux 1}
\frac{d}{dt}W(t)&=\frac{d}{dt}\int_{\Gamma(t)} \frac{\kappa^2}{2}\, ds\nonumber\\
&=\frac{d}{dt}\int_0^1 \frac{\kappa^2}{2} |\partial_\rho \boldsymbol{X}|\, d\rho\nonumber\\
&=\int_0^1 \kappa \,\partial_t\kappa \, |\partial_\rho \boldsymbol{X}|\,d\rho+\int_0^1 \frac{\kappa^2}{2} \partial_t|\partial_\rho \boldsymbol{X}| \,d\rho\nonumber\\
&=\int_{\Gamma(t)} \kappa \,\partial_t\kappa \,ds+\int_0^1 \frac{\kappa^2}{2} |\partial_\rho \boldsymbol{X}|\,\partial_s\boldsymbol{X}\cdot \partial_s (\partial_t \boldsymbol{X}) \,d\rho\nonumber\\
&=\int_{\Gamma(t)} \kappa \,\partial_t\kappa \,ds+\int_{\Gamma(t)} \frac{\kappa^2}{2} \partial_s \partial_t\boldsymbol{X}\cdot \partial_s\boldsymbol{X}  \,ds\nonumber\\
&=\Bigl(\partial_t\kappa, \kappa\Bigr)_{\Gamma(t)}+\Bigl(\frac{\kappa^2}{2}\partial_s \boldsymbol{X}, \partial_s(\partial_t\boldsymbol{X})\Bigr)_{\Gamma(t)}.
\end{align}
On the other hand, by taking test functions $\phi = -V$ in \eqref{eq: willmore weak 1}, $\boldsymbol{\omega}=\partial_t \boldsymbol{X}$ in \eqref{eq: willmore weak 2}, and $\psi = \kappa$ in \eqref{eq: willmore weak 3}, and combining these equations, we obtain
\begin{align}\label{eq: thm1, aux 2}
\Bigl(\partial_t\kappa, \kappa\Bigr)_{\Gamma(t)}=-\Bigl(V, V\Bigr)_{\Gamma(t)}-\Bigl(\frac{\kappa^2}{2}\partial_s \boldsymbol{X}, \partial_s(\partial_t\boldsymbol{X})\Bigr)_{\Gamma(t)}.
\end{align}
Combining \eqref{eq: thm1, aux 1} and \eqref{eq: thm1, aux 2}, we deduce that
\begin{equation}
    \frac{d}{dt}W(t)=-\Bigl(V, V\Bigr)_{\Gamma(t)}\leq 0, \qquad t\ge0,
\end{equation}
which implies \eqref{eq: energy dissipation, weak}. 
\end{proof}

\section{An energy-stable semi-discretization}
\subsection{A spatial semi-discretization}
We first address the discretization of the domain $\mathbb{T}=[0, 1]$. The periodic unit interval $\mathbb{T}=[0, 1]$ is uniformly partitioned into $N\geq 3$ subintervals $I_j = [\rho_{j-1}, \rho_j]$ with nodes at $\rho_j = j h$ for $j = 1, \ldots, N$, where $h=\frac{1}{N}$ represents the mesh size. By the periodicity property of $\mathbb{T}$, we define $0=\rho_0=\rho_N$. We thus introduce the finite element space, which consists of continuous piecewise linear functions in $\cup_{j=1}^N I_j$ as
\begin{equation}
    \mathbb{K}^h:=\Bigl\{u^h\in C(\mathbb{T})\, \Big| u^h|_{I_j} \text{ is linear }\forall j = 1,  \ldots, N, u^h(\rho_0)=u^h(\rho_N) \Bigr\}.
\end{equation}

To discretize the evolving planar curve $\Gamma$, we consider its parameterization $\boldsymbol{X}$, which can be approximated as $\boldsymbol{X}^h(\mathbb{T}, t)=(x^h(\mathbb{T}, t), y^h(\mathbb{T}, t))^T\in [\mathbb{K}^h]^2$. This $\boldsymbol{X}^h$ leads to a polygonal curve $\Gamma^h$, serving as an approximation of $\Gamma$. The polygonal curve $\Gamma^h$ is composed of $N$ ordered line segments $\boldsymbol{h}_j$, which can also be formulated as:
\begin{equation}\label{eq: def of h}
    \Gamma^h=\bigcup\limits_{j=1}^N \boldsymbol{h}_j, \quad \boldsymbol{h}_j:=\boldsymbol{X}^{h}(\rho_{j})-\boldsymbol{X}^h(\rho_{j-1}),
\end{equation}
where $\boldsymbol{h}_j\in \mathbb{R}^2$ is a line segment with two vertices $\boldsymbol{X}^h(\rho_{j-1}), \boldsymbol{X}^h(\rho_{j})$ for $j = 1, \ldots, N$. By using $\boldsymbol{h}_j$, the discretized arc length derivative $\partial_s$ for a function $u^h\in \mathbb{K}^h$ is then defined as
\begin{equation}\label{eq: discrete arc length derivative}
    \partial_s u^h|_{I_j}:=\frac{u^h(\rho_j)-u^h(\rho_{j-1})}{|\boldsymbol{h}_j|}, \quad \forall j = 1, \ldots, N.
\end{equation}
Similarly, the discretized unit tangent and outward normal vectors, $\boldsymbol{\tau}^h$ and $\boldsymbol{n}^h$, are determined according to \eqref{eq: discrete tau and n} and \eqref{eq: discrete arc length derivative} as
\begin{equation}\label{eq: discrete tau and n}
    \boldsymbol{\tau}_j^h=\boldsymbol{\tau}^h|_{I_j}:=\partial_s \boldsymbol{X}^h|_{I_j}=\frac{\boldsymbol{h}_j}{|\boldsymbol{h}_j|}, \quad \boldsymbol{n}_j^h = \boldsymbol{n}^h|_{I_j}:=-(\boldsymbol{\tau}_j^h)^\perp = -\frac{\boldsymbol{h}_j^\perp}{|\boldsymbol{h}_j|}.
\end{equation}

Finally, for the scalar-/vector-valued functions $u, v\in \mathbb{K}^h/[\mathbb{K}^h]^2$, we adopt the following mass-lumped inner product $\left(\cdot, \cdot\right)_{\Gamma^h}^h$ with respect to $\Gamma^h$ to approximate the weighted inner product $\left(\cdot, \cdot\right)_{\Gamma}$ with respect to $\Gamma$ as
\begin{equation}
    \left(u^h, v^h\right)_{\Gamma^h}^h:=\frac{1}{2}\sum_{j=1}^N |\boldsymbol{h}_j|\Bigl[(u^h\cdot v^h)(\rho_{j-1}^+)+(u^h\cdot v^h)(\rho_j^-)\Bigr], \quad \forall u^h, v^h\in \mathbb{K}^h/ [\mathbb{K}^h]^2,
\end{equation}
where $u(\rho_j^{\pm}):=\lim_{\rho\to \rho_j^{\pm}} u(\rho), \forall j = 0, \ldots, N$. 

Based on the above definitions, we now propose an energy-stable spatial semi-discretization for the variational formulation \eqref{eq: willmore weak}: Given the initial polygonal curve $\Gamma^h_0:=\boldsymbol{X}^h_0$ and the initial approximations $\boldsymbol{X}^h(\cdot, 0)=\boldsymbol{X}_0^h(\cdot)\in [\mathbb{K}^h]^2$, 
$V^h(\cdot, 0)=V_0^h(\cdot)\in \mathbb{K}^h$ and $\kappa^h(\cdot, 0)=\kappa_0^h(\cdot)\in \mathbb{K}^h$
satisfying  $\boldsymbol{X}_0^h(\rho_j)=\boldsymbol{X}_0(\rho_j), V_0^h(\rho_j)=V_0(\rho_j), \kappa_0^h(\rho_j)=\kappa_0(\rho_j)$ for $j = 0, \ldots, N$. For any $t\geq 0$, find the solution $\Gamma^h(t):=(\boldsymbol{X}^h(\cdot, t), V^h(\cdot, t), \kappa^h(\cdot, t))\in [\mathbb{K}^h]^2\times \mathbb{K}^h\times \mathbb{K}^h$, such that 
\begin{subequations}
\label{eq: willmore semi}
\begin{align}
\label{eq: willmore semi 1}
&\Bigl(\boldsymbol{n}^h\cdot \partial_t\boldsymbol{X}^h,\, \phi^h\Bigr)_{\Gamma^h(t)}^h = \Bigl(V^h,\,\phi^h\Bigr)_{\Gamma^h(t)}^h, \quad \forall \phi^h \in \mathbb{K}^h,\\[0.5em]
\label{eq: willmore semi 2}
&\Bigl(V^h\boldsymbol{n}^h,\,\boldsymbol{\omega}^h\Bigr)_{\Gamma^h(t)}^h=\Bigl(-\partial_s \kappa^h \boldsymbol{n}^h+\frac{1}{2}(\kappa^h)^2\partial_s \boldsymbol{X}^h,\, \partial_s\boldsymbol{\omega^h}\Bigr)_{\Gamma^h(t)}^h, \;\forall \boldsymbol{\omega}^h \in [\mathbb{K}^h]^2,\\[0.5em]
\label{eq: willmore semi 3}
&\Bigl(\partial_t \kappa^h,\psi^h\Bigr)_{\Gamma^h(t)}^h = \Bigl(\boldsymbol{n}^h\cdot \partial_s(\partial_t \boldsymbol{X}^h),\partial_s \psi^h\Bigr)_{\Gamma^h(t)}^h
-\Bigl((\partial_s \boldsymbol{X}^h\cdot \partial_s(\partial_t \boldsymbol{X}^h)) 
\kappa^h,\psi^h\Bigr)_{\Gamma^h(t)}^h, \ \forall \psi^h \in \mathbb{K}^h.
\end{align}
\end{subequations}

\subsection{Energy stability}
Define the discretized Willmore energy $W^h$ of $\Gamma^h$ as
\begin{equation}\label{eq: def of willmore energy, semi}
    W^h(t):=W^h(\Gamma^h(t)):=\frac{1}{2}\Bigl((\kappa^h(\cdot,t)^2, 1\Bigr)_{\Gamma^h}^h = \frac{1}{4}\sum_{j=1}^N |\boldsymbol{h}_j|\Bigl[(\kappa^h(\rho_{j-1}^+,t))^2+(\kappa^h(\rho_j^-,t))^2\Bigr].
\end{equation}
We show that the semi-discretization \eqref{eq: willmore semi} is a conformal discretization of \eqref{eq: willmore weak}, i.e., its solution is energy-stable.
\begin{theorem}[Energy dissipation]\label{thm: energy, semi}Let $(\boldsymbol{X}^h(\cdot, t), V^h(\cdot, t), \kappa^h(\cdot, t))\in [\mathbb{K}^h]^2\times \mathbb{K}^h\times \mathbb{K}^h$ be a solution of the semi-discritization \eqref{eq: willmore semi} with the initial curve $\Gamma_0^h$. Then the discretized Willmore energy of $\Gamma^h(t)$ given in \eqref{eq: def of willmore energy, semi} is decreasing over time, i.e.,
\begin{equation}\label{eq: energy dissipation, semi}
    W^h(t)\leq W^h(t')\leq W^h(0)=\frac{1}{4}\sum_{j=1}^N |\boldsymbol{h}_{0_j}|\Bigl[((\kappa^h_0)^2)(\rho_{j-1}^+)+((\kappa^h_0)^2)(\rho_j^-)\Bigr], \quad \forall t\geq t'\geq 0.
\end{equation}

\end{theorem}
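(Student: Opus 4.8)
The plan is to reproduce, at the mass-lumped discrete level, the argument of Theorem~\ref{thm: energy, continuous}: I will establish the sharper identity $\frac{d}{dt}W^h(t) = -(V^h, V^h)^h_{\Gamma^h(t)} \le 0$, from which \eqref{eq: energy dissipation, semi} follows by integrating in time. Since $(\boldsymbol{X}^h,V^h,\kappa^h)$ lives in the finite-dimensional space $[\mathbb{K}^h]^2\times\mathbb{K}^h\times\mathbb{K}^h$, the semi-discretization \eqref{eq: willmore semi} is a system of ODEs in the nodal values, so these functions are differentiable in $t$; in particular $\partial_t\boldsymbol{X}^h\in[\mathbb{K}^h]^2$ and $\partial_t\kappa^h\in\mathbb{K}^h$ are legitimate test functions. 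As $\kappa^h$ is continuous its one-sided nodal limits coincide and $W^h(t)=\frac14\sum_{j=1}^N|\boldsymbol{h}_j|[(\kappa^h(\rho_{j-1}))^2+(\kappa^h(\rho_j))^2]$.

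First I would differentiate $W^h(t)$ in time. The contribution of $\partial_t(\kappa^h)^2$ reassembles immediately into $(\partial_t\kappa^h,\kappa^h)^h_{\Gamma^h(t)}$. For the contribution of $\partial_t|\boldsymbol{h}_j|$ I would establish the discrete analogue of \eqref{eq: lem2, aux 1}: using $\boldsymbol{h}_j=\boldsymbol{X}^h(\rho_j)-\boldsymbol{X}^h(\rho_{j-1})$ together with \eqref{eq: discrete arc length derivative} and \eqref{eq: discrete tau and n}, one gets $\partial_t|\boldsymbol{h}_j|=\boldsymbol{\tau}_j^h\cdot\partial_t\boldsymbol{h}_j=|\boldsymbol{h}_j|\,(\partial_s\boldsymbol{X}^h\cdot\partial_s(\partial_t\boldsymbol{X}^h))|_{I_j}$, because $\partial_t\boldsymbol{h}_j=|\boldsymbol{h}_j|\,\partial_s(\partial_t\boldsymbol{X}^h)|_{I_j}$ and $\boldsymbol{\tau}_j^h=\partial_s\boldsymbol{X}^h|_{I_j}$. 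Writing $c_j:=(\partial_s\boldsymbol{X}^h\cdot\partial_s(\partial_t\boldsymbol{X}^h))|_{I_j}$, which is constant on $I_j$, and noting that $(\kappa^h)^2$ enters only through its nodal values, this sum is exactly the mass-lumped product $A:=(\tfrac12(\kappa^h)^2\partial_s\boldsymbol{X}^h,\,\partial_s(\partial_t\boldsymbol{X}^h))^h_{\Gamma^h(t)}$. I thus obtain the discrete counterpart of \eqref{eq: thm1, aux 1}, $\frac{d}{dt}W^h(t)=(\partial_t\kappa^h,\kappa^h)^h_{\Gamma^h(t)}+A$.

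Next I would take $\phi^h=-V^h$ in \eqref{eq: willmore semi 1}, $\boldsymbol{\omega}^h=\partial_t\boldsymbol{X}^h$ in \eqref{eq: willmore semi 2}, and $\psi^h=\kappa^h$ in \eqref{eq: willmore semi 3}. The pointwise identities $V^h\boldsymbol{n}^h\cdot\partial_t\boldsymbol{X}^h=V^h(\boldsymbol{n}^h\cdot\partial_t\boldsymbol{X}^h)$ and $(\boldsymbol{n}^h\cdot\partial_s(\partial_t\boldsymbol{X}^h))\partial_s\kappa^h=(\partial_s\kappa^h\boldsymbol{n}^h)\cdot\partial_s(\partial_t\boldsymbol{X}^h)$ hold at every node, so the corresponding scalar and vector mass-lumped inner products may be interchanged freely. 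Combining \eqref{eq: willmore semi 1} and \eqref{eq: willmore semi 2} then gives $(\partial_s\kappa^h\boldsymbol{n}^h,\partial_s(\partial_t\boldsymbol{X}^h))^h_{\Gamma^h(t)}=A-(V^h,V^h)^h_{\Gamma^h(t)}$, whereas \eqref{eq: willmore semi 3} reads $(\partial_t\kappa^h,\kappa^h)^h_{\Gamma^h(t)}=(\partial_s\kappa^h\boldsymbol{n}^h,\partial_s(\partial_t\boldsymbol{X}^h))^h_{\Gamma^h(t)}-((\partial_s\boldsymbol{X}^h\cdot\partial_s(\partial_t\boldsymbol{X}^h))\kappa^h,\kappa^h)^h_{\Gamma^h(t)}$. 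The decisive discrete identity is that the subtracted term equals $2A$: evaluating it with the same nodal quadrature gives $\frac12\sum_j|\boldsymbol{h}_j|c_j[(\kappa^h(\rho_{j-1}))^2+(\kappa^h(\rho_j))^2]=2A$. Substituting, I find $(\partial_t\kappa^h,\kappa^h)^h_{\Gamma^h(t)}=-A-(V^h,V^h)^h_{\Gamma^h(t)}$, and adding $A$ yields $\frac{d}{dt}W^h(t)=-(V^h,V^h)^h_{\Gamma^h(t)}\le0$, which integrates to \eqref{eq: energy dissipation, semi}.

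I expect the main obstacle to be confirming that the mass-lumping quadrature preserves the exact cancellation of the continuous proof rather than perturbing it. The delicate point is that $(\kappa^h)^2$ is piecewise quadratic, so one must verify that the quadrature defining $W^h$ in \eqref{eq: def of willmore energy, semi} is the very same nodal rule underlying $(\cdot,\cdot)^h_{\Gamma^h(t)}$; this is precisely what makes the factor $A$ produced by $\partial_t|\boldsymbol{h}_j|$ and the factor $2A$ produced by the $\kappa$-evolution term cancel against the $+A$ from the energy derivative, mirroring the continuous relation $((\partial_s\boldsymbol{X}\cdot\partial_s(\partial_t\boldsymbol{X}))\kappa,\kappa)_{\Gamma(t)}=2(\tfrac12\kappa^2\partial_s\boldsymbol{X},\partial_s(\partial_t\boldsymbol{X}))_{\Gamma(t)}$. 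The remaining work --- checking the nodal product identities with $\boldsymbol{n}^h$, $\partial_s\boldsymbol{X}^h$, and $\partial_s(\partial_t\boldsymbol{X}^h)$ all piecewise constant --- is routine bookkeeping, but must be carried out carefully so that no cross terms are inadvertently dropped.
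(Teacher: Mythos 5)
Your proposal is correct and follows essentially the same route as the paper's proof: you establish the discrete identity $\partial_t|\boldsymbol{h}_j|=|\boldsymbol{h}_j|(\partial_s\boldsymbol{X}^h\cdot\partial_s(\partial_t\boldsymbol{X}^h))|_{I_j}$, differentiate $W^h$ to obtain $\frac{d}{dt}W^h=(\partial_t\kappa^h,\kappa^h)^h_{\Gamma^h(t)}+A$, and test with $\phi^h=-V^h$, $\boldsymbol{\omega}^h=\partial_t\boldsymbol{X}^h$, $\psi^h=\kappa^h$ to get $\frac{d}{dt}W^h=-(V^h,V^h)^h_{\Gamma^h(t)}\le 0$. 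The only difference is that you spell out the bookkeeping (the nodal product interchanges and the $2A$ cancellation) that the paper leaves implicit by referring back to Theorem \ref{thm: energy, continuous}; your accounting is accurate.
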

\begin{proof}To prove this theorem, we start with the temporal derivative of the length of each segment $\boldsymbol{h}_j$. From \eqref{eq: def of h}, we deduce that
\begin{align}\label{eq: thm2, aux 1}
    \partial_t |\boldsymbol{h}_j|&=\frac{\partial_t(x^h(\rho_{j}, t)-x^h(\rho_{j-1}, t))(x^h(\rho_{j}, t)-x^h(\rho_{j-1}, t))}{\sqrt{(x^h(\rho_j, t)-x^h(\rho_{j-1}, t))^2+(y^h(\rho_j, t)-y^h(\rho_{j-1}, t))^2}}\nonumber\\
    &\quad + \frac{\partial_t(y^h(\rho_{j}, t)-y^h(\rho_{j-1}, t))(y^h(\rho_{j}, t)-y^h(\rho_{j-1}, t))}{\sqrt{(x^h(\rho_j, t)-x^h(\rho_{j-1}, t))^2+(y^h(\rho_j, t)-y^h(\rho_{j-1}, t))^2}}\nonumber\\
    &=|\boldsymbol{h}_j|\left(\partial_s \boldsymbol{X}^h\cdot \partial_s(\partial_t\boldsymbol{X}^h)\right)
    \Big|_{I_j}.
\end{align}
Considering the derivative of $W^h(t)$ with respect to $t$ and employing \eqref{eq: thm2, aux 1}, we find
\begin{align}\label{eq: thm2, aux 2}
\frac{d}{dt}W^h(t)&=\frac{d}{dt}\frac{1}{4}\sum_{j=1}^N |\boldsymbol{h}_j|\Bigl[(\kappa^h(\rho_{j-1}^+,t))^2+(\kappa^h(\rho_j^-),t)^2\Bigr]\nonumber\\
&=\frac{1}{4}\sum_{j=1}^N |\boldsymbol{h}_j|\left(\partial_s \boldsymbol{X}^h\cdot \partial_s(\partial_t\boldsymbol{X}^h)\right)\Big|_{I_j}\Bigl[((\kappa^h)^2)(\rho_{j-1}^+)+((\kappa^h)^2)(\rho_j^-)\Bigr]\nonumber\\
&\qquad+\frac{1}{2}\sum_{j=1}^N |\boldsymbol{h}_j|\Bigl[(\kappa^h\partial_t \kappa^h)(\rho_{j-1}^+)+(\kappa^h\partial_t \kappa^h)(\rho_j^-)\Bigr]\nonumber\\
&=\Bigl(\frac{(\kappa^h)^2}{2}\partial_s \boldsymbol{X}^h, \partial_s(\partial_t\boldsymbol{X}^h)\Bigr)_{\Gamma^h(t)}^h+\Bigl(\partial_t\kappa^h, \kappa^h\Bigr)_{\Gamma^h(t)}^h.
\end{align}
Following a similar approach as in the proof of Theorem \ref{thm: energy, continuous}, we select $\phi^h = -V^h$ in \eqref{eq: willmore semi 1}, $\boldsymbol{\omega}^h=\partial_t \boldsymbol{X}^h$ in \eqref{eq: willmore semi 2}, and $\psi^h = \kappa^h$ in \eqref{eq: willmore semi 3}. Utilizing \eqref{eq: thm2, aux 2}, we derive that
\begin{equation}
    \frac{d}{dt}W^h(t)=-\Bigl(V^h, V^h\Bigr)_{\Gamma^h(t)}^h\leq 0,
\end{equation}
which indicates the energy dissipation \eqref{eq: energy dissipation, semi}. 
\end{proof}

\section{An energy-stable PFEM}
\subsection{A full-discretization}
To derive a full-discertization of the variational formulation \eqref{eq: willmore weak}, we define $\tau>0$ as the uniform time step and denote the discrete time levels by $t_m:=m\tau$ for $m\ge0$. Let $\Gamma^m:=
(\boldsymbol{X}^m,V^m, \kappa^m)\in [\mathbb{K}^h]^2\times \mathbb{K}^h\times 
\mathbb{K}^h$ represents the approximation of $\Gamma^h(t_m):=(\boldsymbol{X}^h(\cdot, t_m), V^h(\cdot, t_m), \kappa^h(\cdot, t_m))\in [\mathbb{K}^h]^2\times \mathbb{K}^h\times \mathbb{K}^h$. We know that the curve $\Gamma^m$ is also a polygonal curve, and is composed of $N$ ordered line segments $\boldsymbol{h}^m_j$ as
\begin{equation}
    \boldsymbol{h}_j^m:=\boldsymbol{X}^m(\rho_{j})-\boldsymbol{X}^m(\rho_{j-1}), \quad \forall j=1, \ldots, N.
\end{equation}

The discretization of the arc length derivative $\partial_s$, the unit tangent vector $\boldsymbol{\tau}^m$, and the outward unit normal vector $\boldsymbol{n}^m$ for $\Gamma^m$ follows a similar argument to \eqref{eq: discrete arc length derivative} and \eqref{eq: discrete tau and n}. Consequently, the mass-lumped inner product on $\Gamma^m$ is defined as
\begin{equation}
    \left(u^h, v^h\right)_{\Gamma^m}:=\frac{1}{2}\sum_{j=1}^N |\boldsymbol{h}_j^m|\Bigl[(u^h\cdot v^h)(\rho_{j-1}^+)+(u^h\cdot v^h)(\rho_j^-)\Bigr], \quad \forall u^h, v^h\in \mathbb{K}^h/ [\mathbb{K}^h]^2.
\end{equation}

By adopting the backward Euler in time, we propose an energy-stable full-discritization of the variational formulation \eqref{eq: willmore weak} as follows: Given the initial polygonal curve $\Gamma^0:=\Gamma_0^h=\boldsymbol{X}^h_0$ and the initial approximations $(\boldsymbol{X}^0, V^0, \kappa^0)=(\boldsymbol{X}_0^h, V_0^h, \kappa_0^h)$. For any $m\geq 0$, find the solution $\Gamma^{m+1}:=(\boldsymbol{X}^{m+1}, V^{m+1}, \kappa^{m+1})\in [\mathbb{K}^h]^2\times \mathbb{K}^h\times \mathbb{K}^h$, such that 
\begin{subequations}
\label{eq: willmore full}
\begin{align}
\label{eq: willmore full 1}
&\Bigl(\boldsymbol{n}^{m}\cdot \frac{\boldsymbol{X}^{m+1}-\boldsymbol{X}^m}{\tau},\, \phi^h\Bigr)_{\Gamma^m} = \Bigl(V^{m+1},\,\phi^h\Bigr)_{\Gamma^m}, \quad \forall \phi^h \in \mathbb{K}^h,\\[0.5em]
\label{eq: willmore full 2}
&\Bigl(V^{m+1}\boldsymbol{n}^m,\,\boldsymbol{\omega}^h\Bigr)_{\Gamma^m}=\Bigl(-\partial_s \kappa^{m+1} \boldsymbol{n}^m+\frac{1}{2}(\kappa^{m+1})^2\partial_s \boldsymbol{X}^{m+1},\, \partial_s\boldsymbol{\omega^h}\Bigr)_{\Gamma^m}, \quad \forall \boldsymbol{\omega}^h \in [\mathbb{K}^h]^2,\\[0.5em]
\label{eq: willmore full 3}
&\Bigl(\delta\kappa^m,\psi^h\Bigr)_{\Gamma^m} = \Bigl(\boldsymbol{n}^m\cdot \partial_s(\delta \boldsymbol{X}^m), \partial_s \psi^h\Bigr)_{\Gamma^m}
-\Bigl(\partial_s \boldsymbol{X}^{m+1}\cdot \partial_s(\delta \boldsymbol{X}^m) \kappa^{m+1},\psi^h\Bigr)_{\Gamma^m}, \ \forall\psi^h \in \mathbb{K}^h,
\end{align}
\end{subequations}
where $\delta\kappa^m:=\frac{\kappa^{m+1}-\kappa^m}{\tau}$ and $\delta \boldsymbol{X}^m:=\frac{\boldsymbol{X}^{m+1}-\boldsymbol{X}^m}{\tau}$.

\subsection{Energy stability}
Denote the discretized Willmore energy of $\Gamma^m$ as $W^m:=W^h(\Gamma^m)$. Then for the full-discretization of the Willmore flow \eqref{eq: willmore full}, we have the following unconditional energy stability result.

\smallskip 
\begin{theorem}[Unconditional energy dissipation]\label{thm: energy, full}Let $(\boldsymbol{X}^m, V^m, \kappa^m)\in [\mathbb{K}^h]^2\times \mathbb{K}^h\times \mathbb{K}^h$ be a solution of the energy-stable PFEM \eqref{eq: willmore full} with the initial curve $\Gamma_0^h$. Then the discretized Willmore energy $W^m$ is unconditional decreasing with respect to $m$, i.e.,
\begin{equation}\label{eq: energy dissipation, full}
    W^{m+1}\leq W^m\leq W^0=\frac{1}{4}\sum_{j=1}^N |\boldsymbol{h}_j^0|\Bigl[((\kappa^0)^2)(\rho_{j-1}^+)+((\kappa^0)^2)(\rho_j^-)\Bigr], \quad \forall m\geq 0.
\end{equation}
\end{theorem}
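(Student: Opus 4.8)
The plan is to follow the semi-discrete argument of Theorem~\ref{thm: energy, semi}, replacing the time derivative by the backward difference and then controlling the two genuinely discrete effects: the change of the metric from $\Gamma^m$ to $\Gamma^{m+1}$, and the quadratic remainder produced by backward Euler. First I would test the scheme with $\phi^h=V^{m+1}$ in \eqref{eq: willmore full 1}, $\boldsymbol{\omega}^h=\delta\boldsymbol{X}^m$ in \eqref{eq: willmore full 2}, and $\psi^h=\kappa^{m+1}$ in \eqref{eq: willmore full 3}. Under these choices the left-hand sides of \eqref{eq: willmore full 1} and \eqref{eq: willmore full 2} coincide (both equal the mass-lumped integral of $V^{m+1}\,\boldsymbol{n}^m\cdot\delta\boldsymbol{X}^m$), so eliminating that common quantity yields
\begin{equation*}
\bigl(V^{m+1},V^{m+1}\bigr)_{\Gamma^m}=-\bigl(\partial_s\kappa^{m+1}\,\boldsymbol{n}^m,\,\partial_s(\delta\boldsymbol{X}^m)\bigr)_{\Gamma^m}+\tfrac12\bigl((\kappa^{m+1})^2\,\partial_s\boldsymbol{X}^{m+1},\,\partial_s(\delta\boldsymbol{X}^m)\bigr)_{\Gamma^m}.
\end{equation*}
Adding the tested \eqref{eq: willmore full 3}, the two terms carrying $\partial_s\kappa^{m+1}\,\boldsymbol{n}^m$ cancel exactly, leaving the fully discrete counterpart of \eqref{eq: thm1, aux 2},
\begin{equation*}
\bigl(\delta\kappa^m,\kappa^{m+1}\bigr)_{\Gamma^m}=-\bigl(V^{m+1},V^{m+1}\bigr)_{\Gamma^m}-\frac14\sum_{j=1}^N|\boldsymbol{h}_j^m|\,B_j^m\bigl[(\kappa^{m+1}(\rho_{j-1}^+))^2+(\kappa^{m+1}(\rho_j^-))^2\bigr],
\end{equation*}
where $B_j^m:=\bigl(\partial_s\boldsymbol{X}^{m+1}\cdot\partial_s(\delta\boldsymbol{X}^m)\bigr)\big|_{I_j}$ is constant on each edge.

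Next I would convert the left-hand side using the elementary identity $2a(a-b)=a^2-b^2+(a-b)^2$ applied nodewise inside the mass-lumped product, obtaining
\begin{equation*}
\bigl(\delta\kappa^m,\kappa^{m+1}\bigr)_{\Gamma^m}=\frac{1}{2\tau}\bigl((\kappa^{m+1})^2-(\kappa^m)^2,\,1\bigr)_{\Gamma^m}+\frac{1}{2\tau}\bigl((\kappa^{m+1}-\kappa^m)^2,\,1\bigr)_{\Gamma^m},
\end{equation*}
in which the second term is nonnegative. The subtle point is that $\tfrac12((\kappa^m)^2,1)_{\Gamma^m}=W^m$ holds exactly, whereas $\tfrac12((\kappa^{m+1})^2,1)_{\Gamma^m}\ne W^{m+1}$ because the weights are the old edge lengths $|\boldsymbol{h}_j^m|$ rather than $|\boldsymbol{h}_j^{m+1}|$. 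Rewriting $W^{m+1}=\tfrac12((\kappa^{m+1})^2,1)_{\Gamma^m}+\tfrac14\sum_j(|\boldsymbol{h}_j^{m+1}|-|\boldsymbol{h}_j^m|)[\cdots]$ to account for this mismatch, discarding the two nonnegative dissipation terms, and combining with the displayed identities, I expect to arrive at
\begin{equation*}
W^{m+1}-W^m\le\frac14\sum_{j=1}^N\Bigl[(|\boldsymbol{h}_j^{m+1}|-|\boldsymbol{h}_j^m|)-\tau|\boldsymbol{h}_j^m|\,B_j^m\Bigr]\bigl[(\kappa^{m+1}(\rho_{j-1}^+))^2+(\kappa^{m+1}(\rho_j^-))^2\bigr].
\end{equation*}

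The hard part is the remaining edgewise inequality $|\boldsymbol{h}_j^{m+1}|-|\boldsymbol{h}_j^m|\le\tau|\boldsymbol{h}_j^m|\,B_j^m$, which is the fully discrete surrogate for the semi-discrete length identity \eqref{eq: thm2, aux 1}, $\partial_t|\boldsymbol{h}_j|=|\boldsymbol{h}_j|\,\partial_s\boldsymbol{X}^h\cdot\partial_s(\partial_t\boldsymbol{X}^h)$. Writing $\boldsymbol{h}_j^{m+1}=|\boldsymbol{h}_j^m|(\boldsymbol{\tau}_j^m+\tau P_j)$ with $P_j:=\partial_s(\delta\boldsymbol{X}^m)|_{I_j}$, I would use $\partial_s\boldsymbol{X}^{m+1}|_{I_j}=\boldsymbol{h}_j^{m+1}/|\boldsymbol{h}_j^{m+1}|$ to express $B_j^m=\tfrac{|\boldsymbol{h}_j^m|}{|\boldsymbol{h}_j^{m+1}|}(\boldsymbol{\tau}_j^m\cdot P_j+\tau|P_j|^2)$; the claim then reduces, after clearing denominators and squaring, to $1+\tau(\boldsymbol{\tau}_j^m\cdot P_j)\le|\boldsymbol{\tau}_j^m+\tau P_j|$, which follows from the Cauchy--Schwarz bound $(\boldsymbol{\tau}_j^m\cdot P_j)^2\le|P_j|^2$ since $|\boldsymbol{\tau}_j^m|=1$. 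I anticipate that the decisive structural feature --- and the reason the scheme is stable without any restriction on $\tau$ --- is that $B_j^m$ is built from the \emph{implicit} tangent $\partial_s\boldsymbol{X}^{m+1}$ rather than $\partial_s\boldsymbol{X}^m$: this is precisely what produces the factor $|\boldsymbol{h}_j^m|/|\boldsymbol{h}_j^{m+1}|$ that collapses the square-root inequality onto Cauchy--Schwarz. With the edgewise inequality in hand, every summand on the right is nonpositive, so $W^{m+1}\le W^m$; telescoping then gives \eqref{eq: energy dissipation, full}.
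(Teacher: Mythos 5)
Your overall route is the paper's: you choose the same test functions (up to the factor $\tau$), arrive at the same combined identity
$(\delta\kappa^m,\kappa^{m+1})_{\Gamma^m}=-(V^{m+1},V^{m+1})_{\Gamma^m}-\tfrac12((\kappa^{m+1})^2\partial_s\boldsymbol{X}^{m+1},\partial_s(\delta\boldsymbol{X}^m))_{\Gamma^m}$,
discard the same two nonnegative dissipation terms, and reduce everything to an edgewise length inequality; the paper packages that last step as the global inequalities \eqref{Wmp1456} and \eqref{eq: aux identity 6} rather than edge by edge, but the content is identical. Your closing remark about the implicit tangent $\partial_s\boldsymbol{X}^{m+1}$ being the decisive structural feature is exactly right.

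The one substantive slip is your identification $\partial_s\boldsymbol{X}^{m+1}|_{I_j}=\boldsymbol{h}_j^{m+1}/|\boldsymbol{h}_j^{m+1}|$. In the scheme \eqref{eq: willmore full} all discrete arc-length derivatives are taken with respect to $\Gamma^m$, so $\partial_s\boldsymbol{X}^{m+1}|_{I_j}=\boldsymbol{h}_j^{m+1}/|\boldsymbol{h}_j^m|$ (this is visible in the paper's computation \eqref{Wmp1456}, where the denominator is $|\boldsymbol{h}_j^m|^2$). With the correct normalization your quantity becomes $\tau|\boldsymbol{h}_j^m|B_j^m=\bigl(|\boldsymbol{h}_j^{m+1}|^2-\boldsymbol{h}_j^{m+1}\cdot\boldsymbol{h}_j^m\bigr)/|\boldsymbol{h}_j^m|$, and the required edgewise inequality $|\boldsymbol{h}_j^{m+1}|-|\boldsymbol{h}_j^m|\le\tau|\boldsymbol{h}_j^m|B_j^m$ is no longer a bare Cauchy--Schwarz statement: after clearing the denominator it reads $|\boldsymbol{h}_j^{m+1}||\boldsymbol{h}_j^m|-|\boldsymbol{h}_j^m|^2\le|\boldsymbol{h}_j^{m+1}|^2-\boldsymbol{h}_j^{m+1}\cdot\boldsymbol{h}_j^m$, which follows from Cauchy--Schwarz \emph{together with} $(|\boldsymbol{h}_j^{m+1}|-|\boldsymbol{h}_j^m|)^2\ge0$ --- precisely the AM--GM step $|\boldsymbol{h}_j^m|+|\boldsymbol{h}_j^{m+1}|^2/|\boldsymbol{h}_j^m|\ge2|\boldsymbol{h}_j^{m+1}|$ that the paper uses in \eqref{Wmp1456}. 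So your conclusion survives, but as written your final step proves stability for a slightly different scheme (one with the tangent renormalized by the new edge length); replace the last display with the inequality above and the argument matches the paper's.
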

\begin{proof}Taking $\phi^h=-\tau V^{m+1}$ in \eqref{eq: willmore full 1}, $\boldsymbol{\omega}^h=\boldsymbol{X}^{m+1}-\boldsymbol{X}^m$ in \eqref{eq: willmore full 2}, and $\psi^h = \tau\kappa^{m+1}$ in \eqref{eq: willmore full 3}, we derive the following identities:
\begin{subequations}
\label{eq: aux identities}
\begin{align}
\label{eq: aux identity 1}
-\Bigl(\boldsymbol{n}^{m}\cdot (\boldsymbol{X}^{m+1}-\boldsymbol{X}^m),\, V^{m+1}\Bigr)_{\Gamma^m}
=&-\tau\Bigl(V^{m+1},\,V^{m+1}\Bigr)_{\Gamma^m}, \\[0.5em]
\label{eq: aux identity 2}
\Bigl(V^{m+1}\boldsymbol{n}^m,\,\boldsymbol{X}^{m+1}-\boldsymbol{X}^m\Bigr)_{\Gamma^m}=&\Bigl(-\partial_s \kappa^{m+1} \boldsymbol{n}^m,\, \partial_s(\boldsymbol{X}^{m+1}-\boldsymbol{X}^m)\Bigr)_{\Gamma^m}\nonumber\\
&+\frac{1}{2}\Bigl((\kappa^{m+1})^2\partial_s \boldsymbol{X}^{m+1},\, \partial_s(\boldsymbol{X}^{m+1}-\boldsymbol{X}^m)\Bigr)_{\Gamma^m},\\[0.5em]
\label{eq: aux identity 3}
\Bigl(\kappa^{m+1}-\kappa^m,\,\kappa^{m+1}\Bigr)_{\Gamma^m} =&\Bigl(\boldsymbol{n}^m\cdot \partial_s\left(\boldsymbol{X}^{m+1}-\boldsymbol{X}^m\right),\, \partial_s \kappa^{m+1}\Bigr)_{\Gamma^m}\nonumber\\
&-\Bigl(\partial_s \boldsymbol{X}^{m+1}\cdot \partial_s\left(\boldsymbol{X}^{m+1}-\boldsymbol{X}^m\right)\, \kappa^{m+1},\kappa^{m+1}\Bigr)_{\Gamma^m}.
\end{align}
\end{subequations}
Summing these three equations, we obtain that
\begin{align}\label{eq: aux identity 4}
    \Bigl(\kappa^{m+1}-\kappa^m, \,\kappa^{m+1}\Bigr)_{\Gamma^m}=&-\tau\Bigl(V^{m+1},\, V^{m+1}\Bigr)_{\Gamma^m}\nonumber\\
    &-\frac{1}{2}\Bigl(\left(\kappa^{m+1}\right)^2\partial_s \boldsymbol{X}^{m+1},\, \partial_s(\boldsymbol{X}^{m+1}-\boldsymbol{X}^m)\Bigr)_{\Gamma^m}\,.
\end{align}
For $W^{m+1}$, we have the inequality:
\begin{align}\label{Wmp1456}
&\frac{1}{2}\Bigl(\kappa^{m+1}, \kappa^{m+1}\Bigr)_{\Gamma^m}+\frac{1}{4}\Bigl(\left(\kappa^{m+1}\right)^2\partial_s \boldsymbol{X}^{m+1}, \partial_s \boldsymbol{X}^{m+1}\Bigr)_{\Gamma^m}-\frac{1}{4}\Bigl(\left(\kappa^{m+1}\right)^2\partial_s \boldsymbol{X}^{m}, \partial_s \boldsymbol{X}^{m}\Bigr)_{\Gamma^m}\nonumber\\
&=\frac{1}{4}\sum_{j=1}^N |\boldsymbol{h}_j^m|\Bigl[(\kappa^{m+1})^2(\rho_{j-1})+(\kappa^{m+1})^2(\rho_j)\Bigr]\nonumber\\
&\quad + \frac{1}{8}\sum_{j=1}^N |\boldsymbol{h}_j^m|\Bigl[(\kappa^{m+1})^2(\rho_{j-1})\frac{\boldsymbol{h}_j^{m+1}\cdot \boldsymbol{h}_j^{m+1}}{|\boldsymbol{h}_j^m|^2}+(\kappa^{m+1})^2(\rho_j)\frac{\boldsymbol{h}_j^{m+1}\cdot \boldsymbol{h}_j^{m+1}}{|\boldsymbol{h}_j^m|^2}\Bigr]\nonumber\\
&\quad - \frac{1}{8}\sum_{j=1}^N |\boldsymbol{h}_j^m|\Bigl[(\kappa^{m+1})^2(\rho_{j-1})\frac{\boldsymbol{h}_j^{m}\cdot \boldsymbol{h}_j^{m}}{|\boldsymbol{h}_j^m|^2}+(\kappa^{m+1})^2(\rho_j)\frac{\boldsymbol{h}_j^{m}\cdot \boldsymbol{h}_j^{m}}{|\boldsymbol{h}_j^m|^2}\Bigr]\nonumber\\
&=\sum_{j=1}^N \frac{|\boldsymbol{h}_j^m|+\frac{|\boldsymbol{h}_j^{m+1}|^2}{|\boldsymbol{h}_j^m|}}{8}\Bigl[(\kappa^{m+1})^2(\rho_{j-1})+(\kappa^{m+1})^2(\rho_j)\Bigr]\nonumber\\
&\geq \sum_{j=1}^N \frac{|\boldsymbol{h}_j^{m+1}|}{4}\Bigl[(\kappa^{m+1})^2(\rho_{j-1})+(\kappa^{m+1})^2(\rho_j)\Bigr]\nonumber\\
&=W^{m+1}.
\end{align}
Applying the inequality $a(a-b)\geq \frac{1}{2}a^2-\frac{1}{2}b^2$, we deduce that
\begin{align}
\label{eq: aux identity 5}
\Bigl(\kappa^{m+1}-\kappa^m, \,\kappa^{m+1}\Bigr)_{\Gamma^m}\geq&\frac{1}{2}\Bigl(\kappa^{m+1}, \kappa^{m+1}\Bigr)_{\Gamma^m}-\frac{1}{2}\Bigl(\kappa^{m}, \kappa^{m}\Bigr)_{\Gamma^m}\nonumber\\
=&\frac{1}{2}\Bigl(\kappa^{m+1}, \kappa^{m+1}\Bigr)_{\Gamma^m}-W^m,
\end{align}
\begin{align}
\label{eq: aux identity 6}
    &\frac{1}{2}\Bigl(\left(\kappa^{m+1}\right)^2\partial_s \boldsymbol{X}^{m+1},\, \partial_s(\boldsymbol{X}^{m+1}-\boldsymbol{X}^m)\Bigr)_{\Gamma^m}\nonumber\\
    &\ \geq \frac{1}{4}\Bigl(\left(\kappa^{m+1}\right)^2\partial_s \boldsymbol{X}^{m+1}, \partial_s \boldsymbol{X}^{m+1}\Bigr)_{\Gamma^m}
    -\frac{1}{4}\Bigl(\left(\kappa^{m+1}\right)^2\partial_s \boldsymbol{X}^{m}, \partial_s \boldsymbol{X}^{m}\Bigr)_{\Gamma^m}.
\end{align}
Combining \eqref{eq: aux identity 4}, \eqref{Wmp1456},  \eqref{eq: aux identity 5} and \eqref{eq: aux identity 6}, we conclude that
\begin{align*}
    0&\geq-\tau\Bigl(V^{m+1},\, V^{m+1}\Bigr)_{\Gamma^m}\\
    &=\Bigl(\kappa^{m+1}-\kappa^m, \,\kappa^{m+1}\Bigr)_{\Gamma^m}+\Bigl(\frac{1}{2}\left(\kappa^{m+1}\right)^2\partial_s \boldsymbol{X}^{m+1},\, \partial_s(\boldsymbol{X}^{m+1}-\boldsymbol{X}^m)\Bigr)_{\Gamma^m}\\
    &\geq W^{m+1}-W^m.
\end{align*}
Which is the desired energy stability \eqref{eq: energy dissipation, full}.
\end{proof}
\subsection{An iterative solver}
The energy-stable PFEM \eqref{thm: energy, full} is fully-implicit. To solve for $(\boldsymbol{X}^{m+1}, V^{m+1}, \kappa^{m+1})$ in  $[\mathbb{K}^h]^2\times \mathbb{K}^h\times \mathbb{K}^h$, we employ the following Newton-Raphson iteration: 

Set the initial approximation $(\boldsymbol{X}^{m+1, 0}, V^{m+1, 0}, \kappa^{m+1, 0})$ to be the solution from the previous time step $(\boldsymbol{X}^m, V^m, \kappa^m)$. For each iteration $i$, the next approximation $(\boldsymbol{X}^{m+1, i+1}, V^{m+1, i+1}, \kappa^{m+1, i+1})$ is determined by the current approximation $(\boldsymbol{X}^{m+1, i}, V^{m+1, i}, \kappa^{m+1, i})$ as follows
\begin{subequations}
\label{eq: Newton}
\begin{align}
\label{eq: Newton 1}
&\Bigl(\boldsymbol{n}^m\cdot\frac{\boldsymbol{X}^{m+1, i+1}-\boldsymbol{X}^m}{\tau}, \phi^h\Bigr)_{\Gamma^m}=\Bigl(V^{m+1, i+1},\, \phi^h\Bigr)_{\Gamma^m},\quad \forall \phi^h \in \mathbb{K}^h,\\[1em]
\label{eq: Newton 2}
&\Bigl(V^{m+1, i+1}\boldsymbol{n}^m,\boldsymbol{\omega}^h\Bigr)_{\Gamma^m}=\Bigl(\partial_s \kappa^{m+1, i+1} \boldsymbol{n}^{m}+\frac{1}{2}\left(\kappa^{m+1, i}\right)^2\partial_s \boldsymbol{X}^{m+1, i+1},\, \partial_s\boldsymbol{\omega}^h\Bigr)_{\Gamma^m}\nonumber\\
&\quad\quad\quad\quad +\Bigl((\kappa^{m+1, i+1}-\kappa^{m+1, i})\kappa^{m+1, i}\partial_s \boldsymbol{X}^{m+1, i},\, \partial_s\boldsymbol{\omega}^h\Bigr)_{\Gamma^m}, \quad \forall \boldsymbol{\omega}^h \in [\mathbb{K}^h]^2, \\[1em]
\label{eq: Newton 3}
&\Bigl(\kappa^{m+1, i+1}-\kappa^m, \psi^h\Bigr)_{\Gamma^m} = \Bigl(\boldsymbol{n}^m\cdot \partial_s\left(\boldsymbol{X}^{m+1, i+1}-\boldsymbol{X}^m\right),\, \partial_s \psi^h\Bigr)_{\Gamma^m} \nonumber\\
& \quad\quad\qquad   -\Bigl(\partial_s \boldsymbol{X}^{m+1, i+1}\cdot \partial_s\left(\boldsymbol{X}^{m+1, i}-\boldsymbol{X}^m\right)\, \kappa^{m+1, i},\psi^h\Bigr)_{\Gamma^m}\nonumber\\
&\quad\quad\qquad    -\Bigl(\partial_s \boldsymbol{X}^{m+1, i}\cdot \partial_s\left(\boldsymbol{X}^{m+1, i+1}-\boldsymbol{X}^{m+1, i}\right)\, \kappa^{m+1, i},\psi^h\Bigr)_{\Gamma^m}\nonumber\\
&\quad\quad\qquad   -\Bigl(\partial_s \boldsymbol{X}^{m+1, i}\cdot \partial_s\left(\boldsymbol{X}^{m+1, i}-\boldsymbol{X}^m\right)\, (\kappa^{m+1, i+1}-\kappa^{m+1, i}),\psi^h\Bigr)_{\Gamma^m},\quad \forall \psi^h \in \mathbb{K}^h.
\end{align}
\end{subequations}
The iterative process is repeated until all of the following three convergence criteria are satisfied:
\begin{subequations}
\begin{align*}
&\left\|\boldsymbol{X}^{m+1, i+1}-\boldsymbol{X}^{m+1, i}\right\|_{L^\infty}=\max_{1\leq j\leq N} |\boldsymbol{X}^{m+1, i+1}(\rho_j)-\boldsymbol{X}^{m+1, i}(\rho_j)|\leq \text{tol}.\\
&\left\|V^{m+1, i+1}-V^{m+1, i}\right\|_{L^\infty}=\max_{1\leq j\leq N} |V^{m+1, i+1}(\rho_j)-V^{m+1, i}(\rho_j)|\leq \text{tol}.\\
&\left\|\kappa^{m+1, i+1}-\kappa^{m+1, i}\right\|_{L^\infty}=\max_{1\leq j\leq N} |\kappa^{m+1, i+1}(\rho_j)-\kappa^{m+1, i}(\rho_j)|\leq \text{tol}.
\end{align*}
\end{subequations}
Here tol is a predefined tolerance level, ensuring the iterative process continues until the solution achieves a certain accuracy.

\section{Numerical results}
In this section, we present several numerical tests to illustrate the performance of the energy-stable PFEM \eqref{eq: willmore full}. 

We note that our numerical scheme \eqref{eq: willmore full} requires the initial condition $(\boldsymbol{X}^0, V^0, \kappa^0)$ to be prescribed. To choose a consistent initial approximation of the normal velocity $V^0$ and the mean curvature $\kappa^0$, we use a smooth curve for our initial curve $\Gamma_0$. So that its mean curvature $\kappa_0=-\boldsymbol{n}\cdot \partial_{ss}\boldsymbol{X}_0$ and velocity $V_0=\partial_{ss}\kappa_0+\frac{1}{2}\kappa_0^3$ can be given analytically. And the initial approximation $(\boldsymbol{X}^0, \kappa^0, V^0)$ in our iterative solver \eqref{eq: Newton} can be thus determined. Moreover, the iteration tolerance is chosen as $\text{tol} = 10^{-12}$.

First we aim to perform a convergence test for our energy-stable PFEM \eqref{thm: energy, full}. Here we choose the time step size $\tau$ and the mesh size $h$ as $\tau = \frac{h^2}{2}$ in all cases. This special choice is based on \eqref{thm: energy, full} being first-order in time and second-order in space, and can thus achieve both accuracy and computational efficiency.

We consider three distinct initial curves for our tests:
\begin{enumerate}
\item a unit circle: $\Gamma:=x^2+y^2=1$,
\item an ellipse: $\Gamma:=\frac{x^2}{2}+y^2=1$, and
\item a 3-fold curve: 
\begin{align*}
\left\{\begin{array}{l}x=(1+\frac{1}{15}\cos (3\theta))\cos\theta,\\[0.3em]
y = (1+\frac{1}{15}\cos(3\theta))\sin\theta,
\end{array}\right.\qquad\theta\in[0,~2\pi].
\end{align*}
\end{enumerate}

To test the convergence rate for $V$ and $\kappa$, we employ the first initial shape, the unit circle. The exact solution of the unit circle is known as follows \cite{Barrett08}:
\begin{equation}\label{eq: exact}
    \boldsymbol{X}(\rho, t)=R(t)\boldsymbol{X}_0(\rho),\quad  \kappa(\rho, t)=R(t)^{-1}, \quad V(\rho, t) = \frac{1}{2} R(t)^{-3},\quad \forall \rho \in \mathbb{T},\ t\geq 0,
\end{equation}
with $R(t)=(1+2t)^{1/4}$. We test our energy-stable PFEM \eqref{thm: energy, full} with a uniformly partitioned initial approximation $\boldsymbol{X}^0(\rho_j)=(\cos (2\pi \rho_j), \sin (2\pi\rho_j))^T, \forall 0\leq j\leq N$;
and a non-uniformly partitioned initial approximation: $\boldsymbol{X}^0(\rho_j)=(\cos(2\pi\rho_j+0.1\sin(2\pi\rho_j)), \sin(2\pi\rho_j+0.1\sin(2\pi\rho_j)))^T, \forall 0\leq j\leq N$. The latter one is the same as that given in \cite{Barrett08}. We apply the $L^{\infty}$-norm to assess the numerical error between the numerical solution $(\boldsymbol{X}^m, V^m, \kappa^m)$ and the exact solution $(\boldsymbol{X}_e(t_m), V_e(t_m), \kappa_e(t_m))$. In Table 5.1, we report on the numerical errors between the numerical and exact solutions for the Willmore flow of a circle at $t_m=1$. It can be observed that the order of convergence for $V$ and $\kappa$ is almost 2.
\begin{table}[htp!]
\label{tb: convergence rate 1}
\begin{center}
\begin{tabular}{|c|c|c|c|c|}
\hline
& \multicolumn{4}{c|}{The uniformly partitioned unit circle} \\
\hline
$h$ & $\|V^m-V_e(1)\|_{L^\infty}$ & order & $\|\kappa^m - \kappa_e(1)\|_{L^\infty}$ & order \\
\hline
$2^{-3}$ & $1.56\times 10^{-03}$ & - & $2.51\times 10^{-02}$ & $-$ \\
$2^{-4}$ & $4.97\times 10^{-04}$ & $1.65 $& $6.01\times 10^{-03}$ & $2.06$ \\
$2^{-5}$ & $1.30\times 10^{-04}$ & $1.93 $& $1.49\times 10^{-03}$ & $2.02$ \\
$2^{-6}$ & $3.29\times 10^{-05}$ & $1.98 $& $3.70\times 10^{-04}$ & $2.00$ \\
$2^{-7}$ & $8.26\times 10^{-06}$ & $2.00 $& $9.25\times 10^{-05}$ & $2.00$ \\
$2^{-8}$ & $2.07\times 10^{-06}$ & $2.00 $& $2.31\times 10^{-05}$ & $2.00$ \\
\hline
\end{tabular}

\vspace{2em}

\begin{tabular}{|c|c|c|c|c|}
\hline
& \multicolumn{4}{c|}{The non-uniformly partitioned unit circle} \\
\hline
$h$ & $\|V^m-V_e(1)\|_{L^\infty}$ & order & $\|\kappa^m - \kappa_e(1)\|_{L^\infty}$ & order \\
\hline
$2^{-3}$ & $1.79\times 10^{-03}$ & $-$ & $2.84\times 10^{-02}$ &  - \\
$2^{-4}$ & $5.52\times 10^{-04}$ & $1.70$ & $6.84\times 10^{-03}$ & $2.06 $\\
$2^{-5}$ & $1.44\times 10^{-04}$ & $1.94$ & $1.69\times 10^{-03}$ & $2.01 $\\
$2^{-6}$ & $3.64\times 10^{-05}$ & $1.99$ & $4.22\times 10^{-04}$ & $2.00 $\\
$2^{-7}$ & $9.11\times 10^{-06}$ & $2.00$ & $1.06\times 10^{-04}$ & $2.00 $\\
$2^{-8}$ & $2.28\times 10^{-06}$ & $2.00$ & $2.64\times 10^{-05}$ & $2.00 $\\
\hline
\end{tabular}
\end{center}
\caption{Numerical errors $\|V^m-V_e(1)\|_{L^\infty}$, $\|\kappa^m - \kappa_e(1)\|_{L^\infty}$ and their convergence order with the uniformly partitioned unit circle and the non-uniformly partitioned unit circle.}
 \end{table}

We adopt the manifold distance $M(\cdot, \cdot)$ given in \cite{zhao2021energy} to test the convergence order for curves. Let $\Omega_1$ and $\Omega_2$ be the inner enclosed regions by $\Gamma_1$ and $\Gamma_2$, respectively,
then the manifold distance $M(\Gamma_1, \Gamma_2)$ is given as
\begin{equation}
     M(\Gamma_1, \Gamma_2):=2|\Omega_1\cup \Omega_2|-|\Omega_1|-|\Omega_2|,
 \end{equation}
where $|\Omega|$ denotes the area of $\Omega$. The numerical error $e^h(t_m)$ is $M(\Gamma^m, \Gamma_e(t_m))$, where $\Gamma_e(t_m):=\boldsymbol{X}_e(\mathbb{T})$ denotes the exact curve. For the unit circle, the exact curve is given in \eqref{eq: exact}. While for the ellipse and the 3-fold curve, the exact curves are not analytically given, and we utilize a very fine mesh size $h_e = 2^{-9}$ and time step size $\tau_e = \frac{h_e^2}{2}$ to approximate the exact curve $\Gamma_e$. 

\begin{figure}[htp!]
\centering
\includegraphics[width=0.5\textwidth]{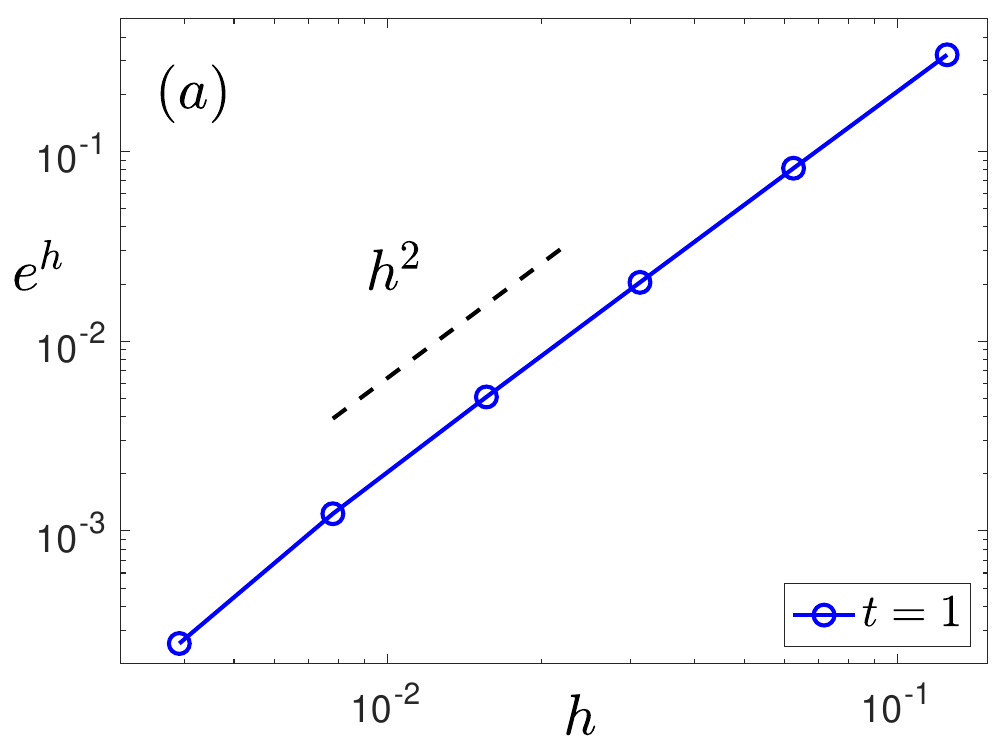}\includegraphics[width=0.5\textwidth]{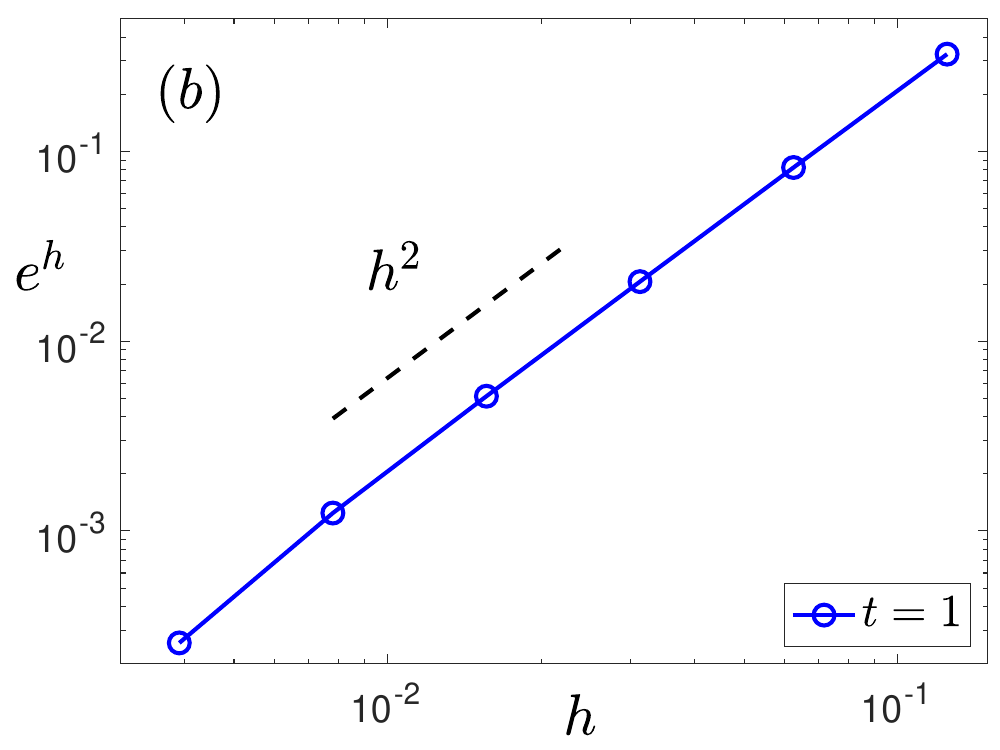}
\includegraphics[width=0.5\textwidth]{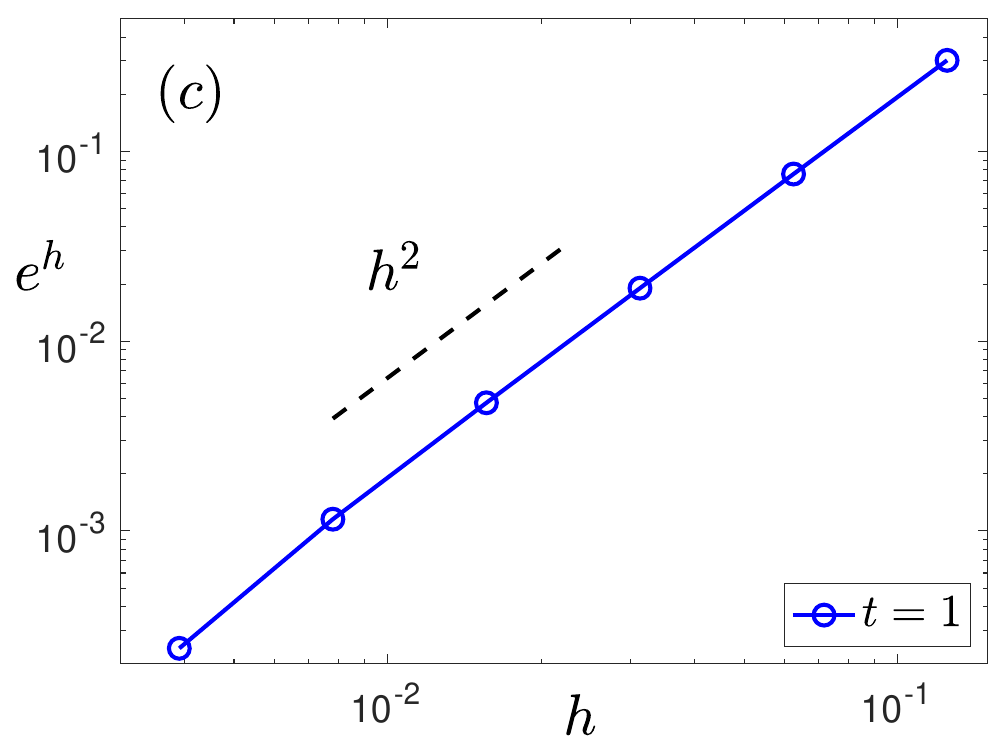}\includegraphics[width=0.5\textwidth]{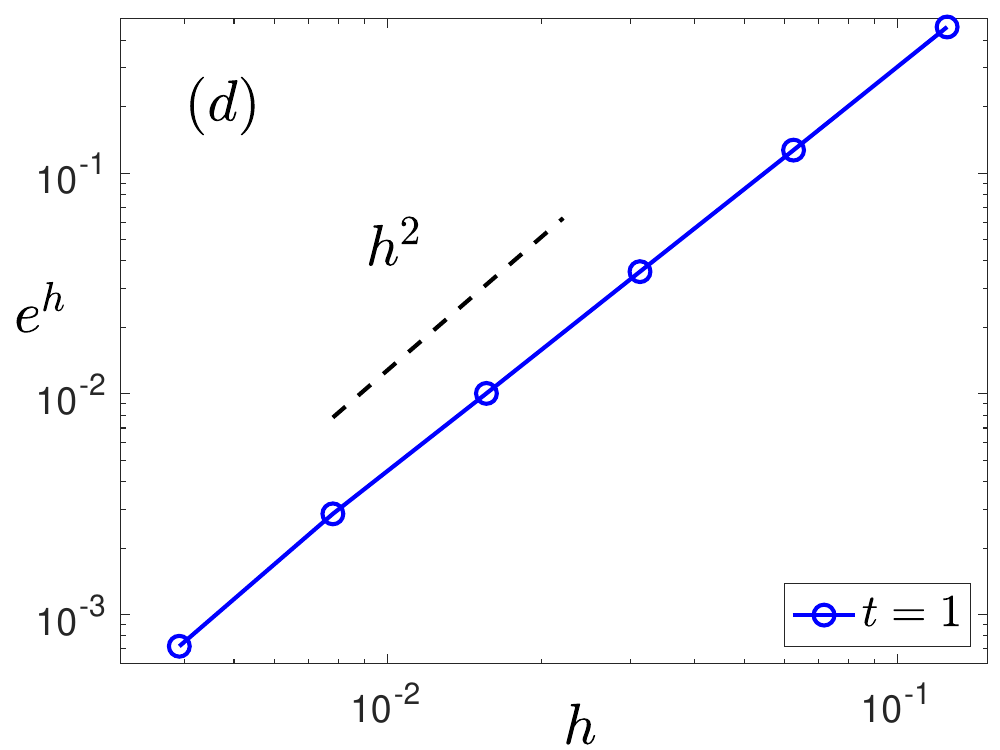}
\caption{Plot of numerical errors $e^h$ of different initial curves at $t_m=1$: (a) a uniformly partitioned unit circle, (b) a non-uniformly partitioned unit circle, (c): an ellipse, (d): a 3-fold curve}
\label{fig: convergence}
\end{figure}

Figure \ref{fig: convergence} demonstrates the errors $e^h$ between the numerical and exact curve for the Willmore flow of the ellipse and the 3-fold curve at $t_m=1$. The observed convergence order is almost 2 among different initial curves, indicating that the second-order convergence rate is robust. 

%To further investigate the performance of \eqref{thm: energy, full}, we focus on two indicators: the number of iterations required for each step, and the mesh ratio $\Psi(t)$. The mesh ratio is quantified as follows:
%\begin{equation}
%\Psi(t_m)=\Psi(\Gamma^m):=\frac{\max_{1\leq j\leq N} |\boldsymbol{h}_j^m|}{\min_{1\leq j\leq N}|\boldsymbol{h}_j^m|}.
%\end{equation}

To further investigate the performance of \eqref{thm: energy, full}, we test the number of iterations required for each step. Figure \ref{fig: cnt} presents the number of iterations with the previous four different setups, using $h = 2^{-8}$ and $\tau=\frac{h^2}{2}$. We observe that the number of iterations is mostly within two.  

\begin{figure}[htp!]
\centering
\includegraphics[width=0.49\textwidth]{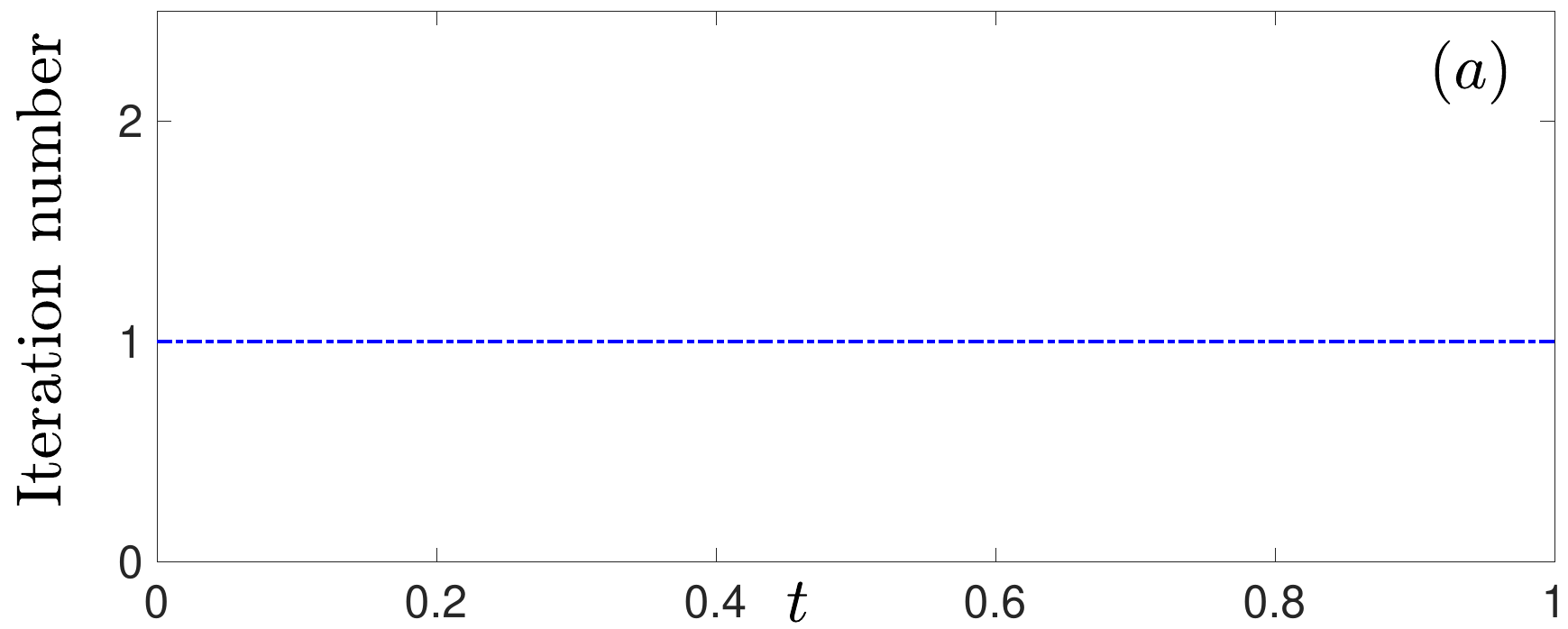}\;\includegraphics[width=0.49\textwidth]{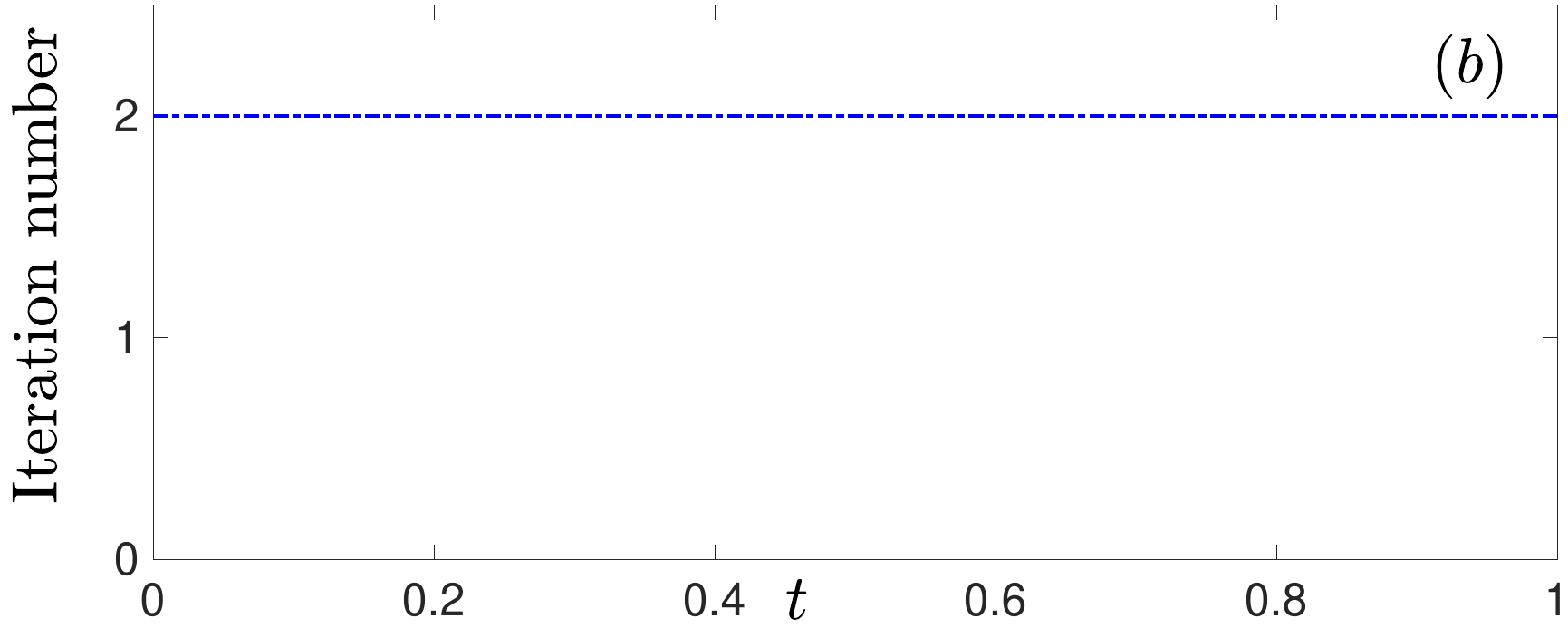}
\includegraphics[width=0.49\textwidth]{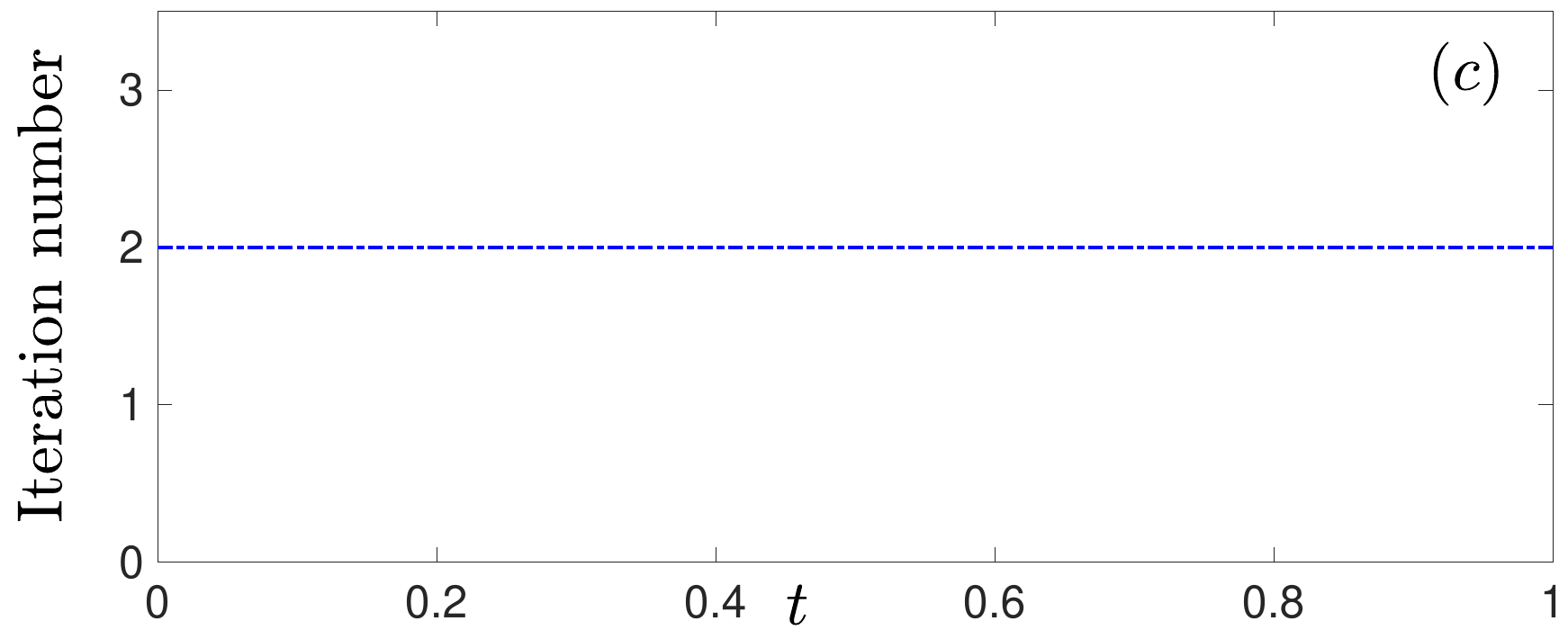}\;\includegraphics[width=0.49\textwidth]{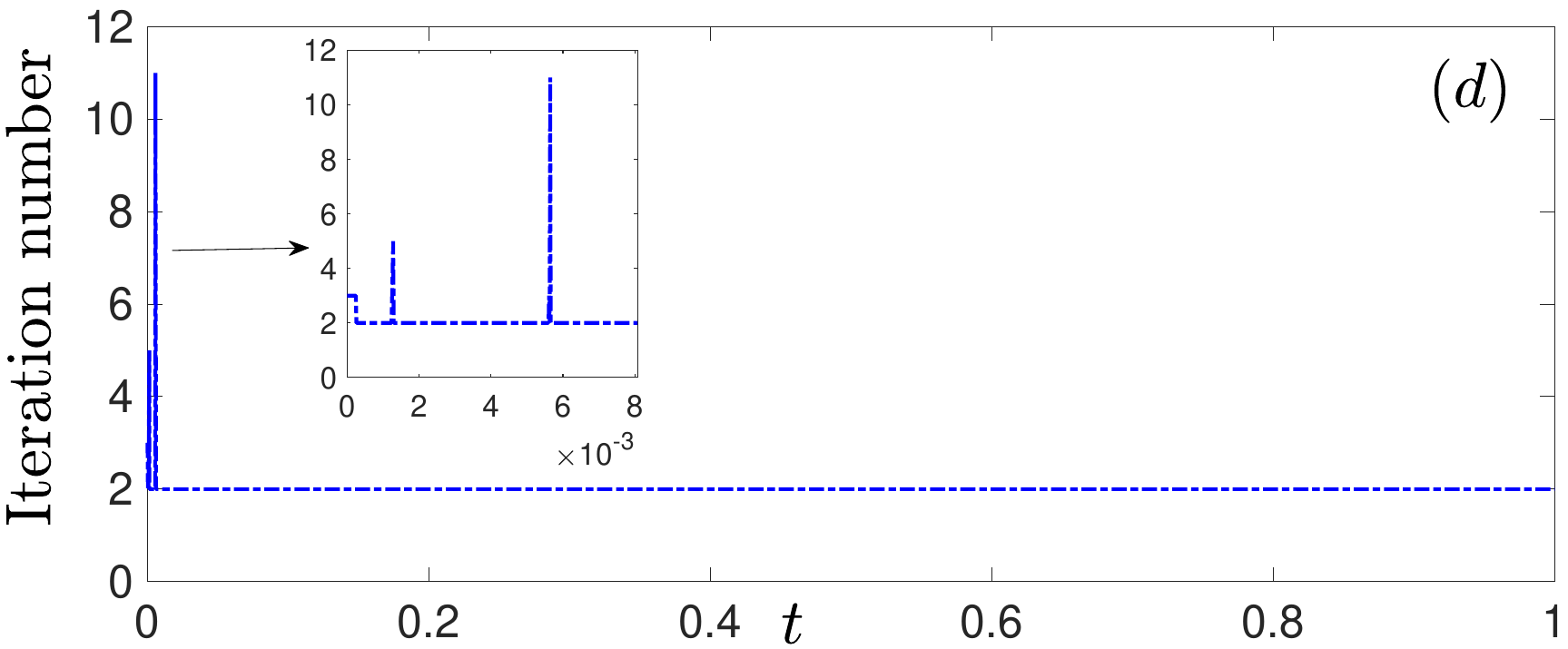}
\caption{Plot of the number of iterations of different initial curves with $h=2^{-8}$ and $\tau=\frac{h^2}{2}$ at $t_m=1$: (a) a uniformly partitioned unit circle, (b) a non-uniformly partitioned unit circle, (c): an ellipse, (d): a 3-fold curve.}
\label{fig: cnt}
\end{figure}

Next, we test the unconditional energy stability. In Figure \ref{fig: energy} (a), (b), we illustrate the normalized energy $W^m/W^0$ with fixed mesh size $h = 2^{-8}$ and varying time step size $\tau = 0.0001, 0.001, 0.005$ for the elliptical curves, and the 3-fold curve, respectively. We find that the energy is decreasing monotonically, and the unconditional energy stability holds for relatively large $\tau$. This result aligns well with our theoretical result of unconditional energy stability Theorem \ref{thm: energy, full}. 

\begin{figure}[htp!]
\centering
\includegraphics[width=0.5\textwidth]{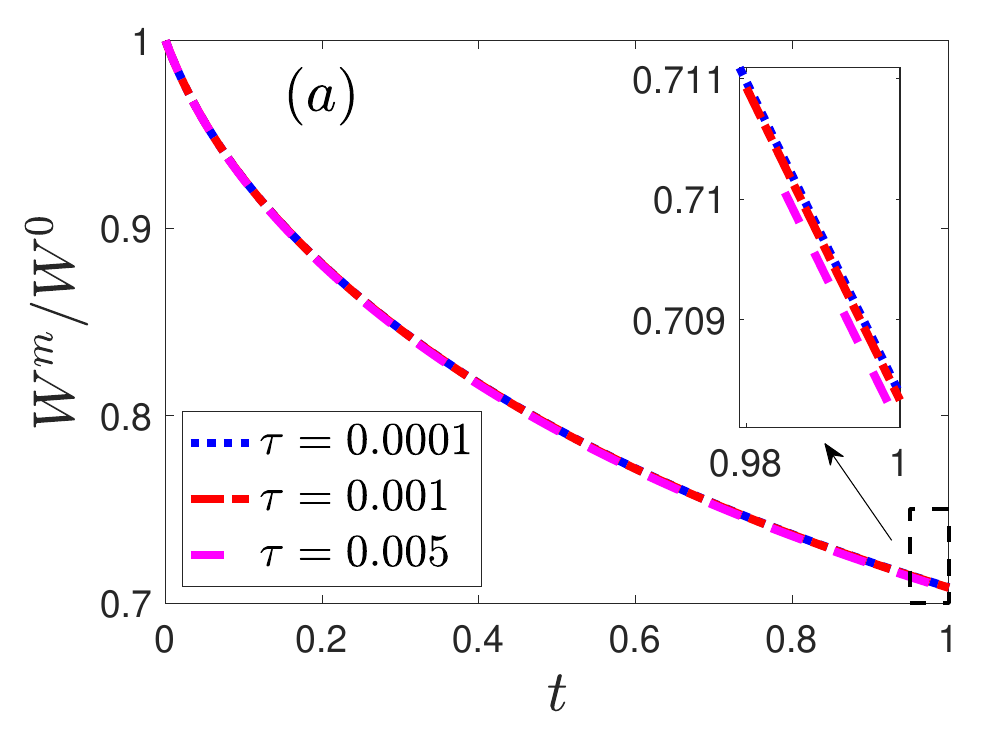}\includegraphics[width=0.5\textwidth]{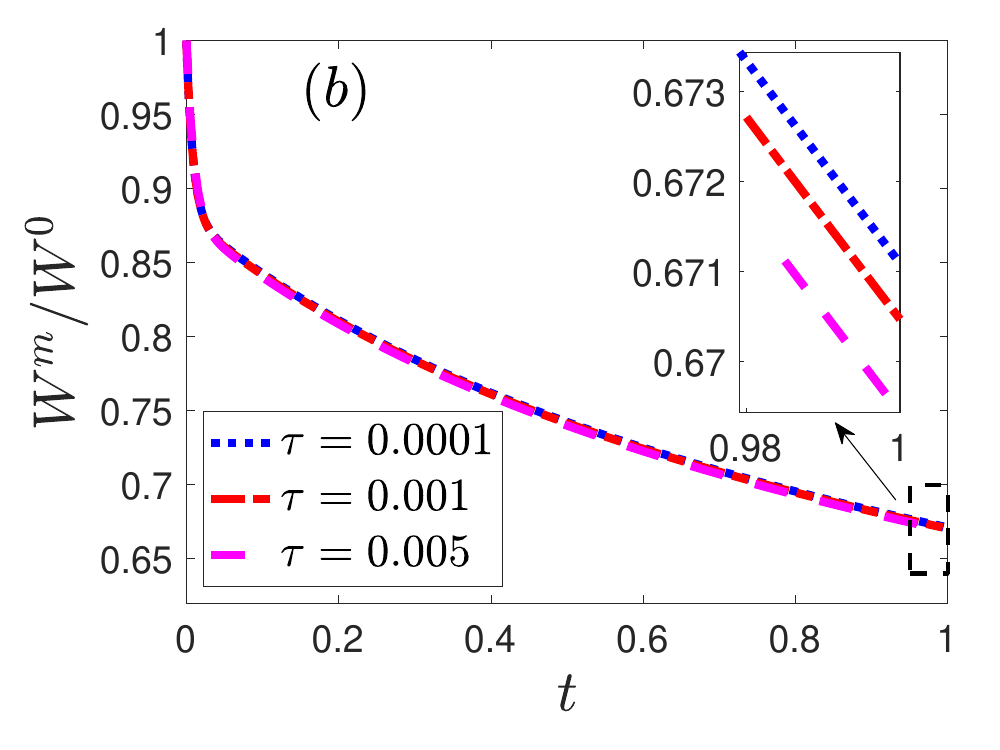}
\caption{Plot of the normalized energy $W^m/W^0$ with fixed mesh size $h = 2^{-8}$ and varying time step size $\tau = 0.0001, 0.001, 0.005$ for (a) the elliptical curves, and (b) the 3-fold curve.}
\label{fig: energy}
\end{figure}

%To further investigate the performance of the proposed energy-stable PFEM, we consider the number of iterations in the energy-stable PFEM and also the mesh ratio indicator given as
%\begin{equation}
 %   \Psi(\Gamma)=\frac{\max |\boldsymbol{h}_i|}{\min|\boldsymbol{h}_i|}.
%\end{equation}

%Figure 3, 4, shows the two indicators for the different four setups with $h = 0$ and $\tau = 0$. We can infer that, it only takes one or two iterations for each step, thus our numerical scheme is efficient. Also, we observe that for the uniformly partitioned case, the mesh ratio remains 1. Even for the non-uniformly partitioned cases, the mesh ratio still behaves in a disired way.

%Next, we validate the unconditionally energy stability. In figure 5(a), 5(b), we choose different $\tau$ and $h$ for the elliptical curves, and the 3-fold curve, respectively. We observe that the energy is decreasing monotonically as expected. Also, in figure 6(a), 6(b) we fix the mesh size $h$ and varies the time step size $\tau$. We find that the energy stability still holds for relatively large $\tau$. For a fourth-order equation, we can even choose $\tau = O(h)$, this is the advantage of our energy-stable numerical scheme.

We would also like to illustrate the evolution of planar curves under the Willmore flow. Figure \ref{fig: evolvee} (a), (b) depict the evolution for the initial elliptical curves and the 3-fold curve with $h=2^{-5}$ and $\tau = 0.001$, respectively. It can be observed that the initial shapes gradually transform into a circle and then expand uniformly. The rate of expansion decreases as the circle's radius increases. We have also noted that, the mesh points remain uniformly distributed along the curve, indicating the desired mesh quality of our energy-stable PFEM \eqref{eq: willmore full}.

\begin{figure}[htp!]
\centering
\includegraphics[width=0.6\textwidth]{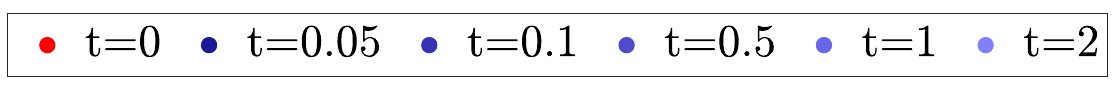}
\includegraphics[width=0.5\textwidth]{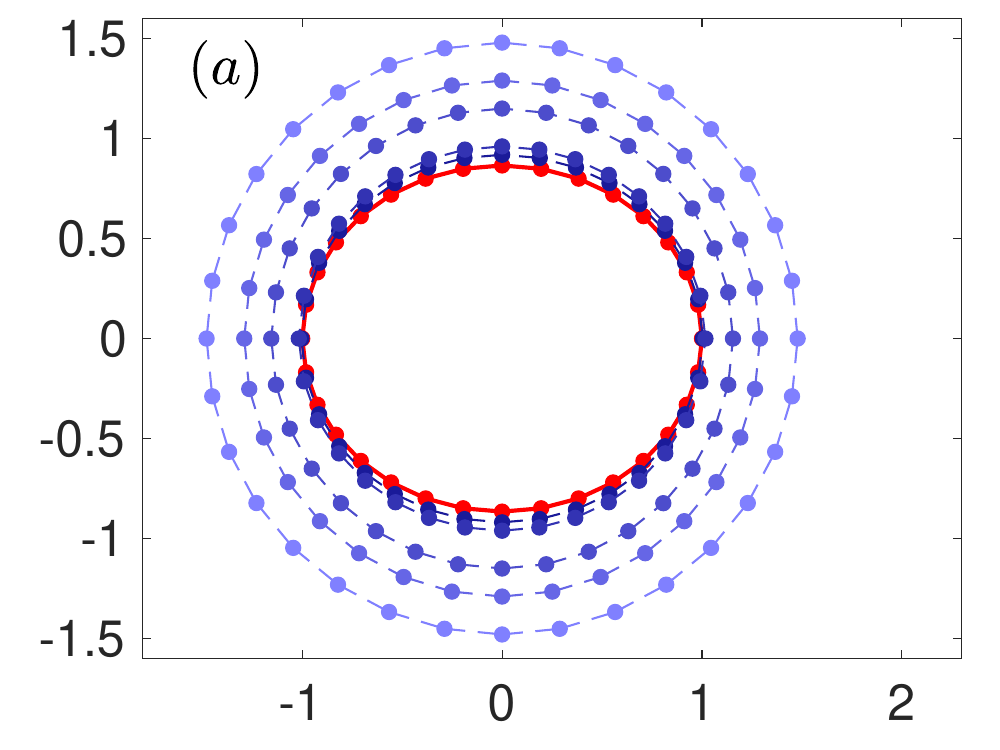}\includegraphics[width=0.5\textwidth]{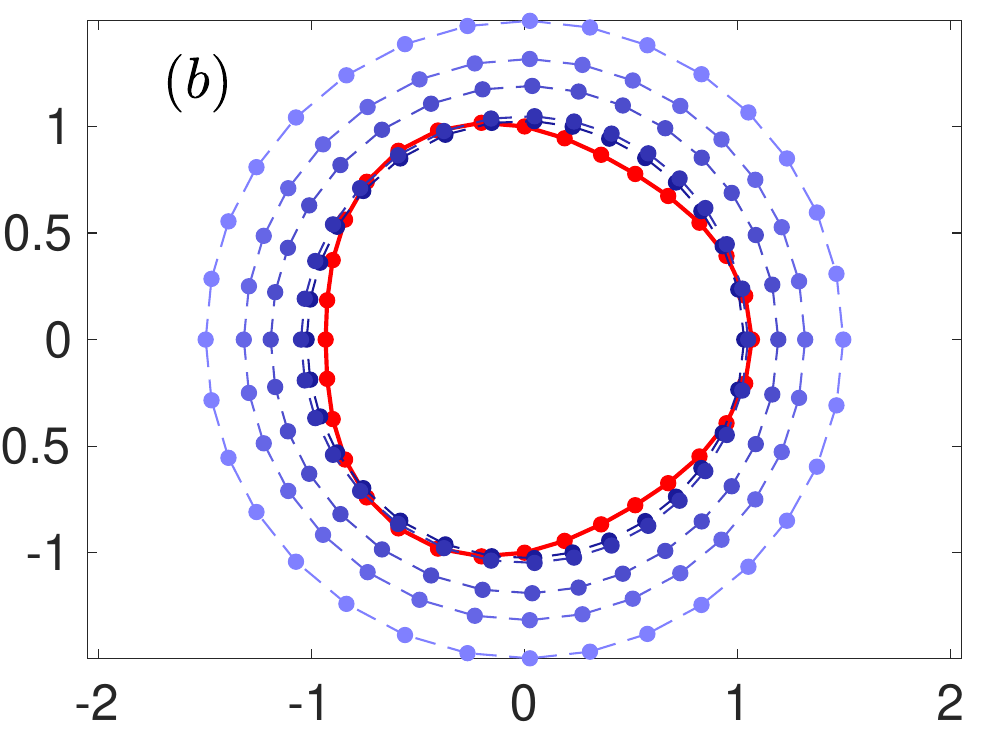}
\caption{The evolution of (a) an initial elliptical curve (b) an initial 3-fold curve under the Willmore flow, where the initial curve is colored in red, and the intermediate curves evolve over time from dark blue to light blue. The mesh size $h = 2^{-5}$ and the time step size $\tau = 0.001$.}
\label{fig: evolvee}
\end{figure}

Finally, we want to remark that, unlike the PFEM for surface diffusion in \cite{Barrett07}, the mesh points are not guaranteed to be asymptotically equally distributed in our energy-stable PFEM \eqref{eq: willmore full} for the Willmore flow. Therefore, for complex curves, our numerical scheme may require mesh redistribution. However, there is no mesh relocation methods that can preserve the energy dissipation. So it would be interesting to explore how to extend our numerical scheme to complex curves by improving the mesh quality as in \cite{barrett2017stable}. 

\section{Conclusions}

We have developed an energy-stable parametric finite element method (PFEM) for the Willmore flow of planar curves in $\mathbb{R}^2$. This advancement was achieved by introducing two novel geometric identities, the first one couples the outward unit normal vector $\boldsymbol{n}$ and the normal velocity $V$ together, and the second one considers the time derivative of $\kappa$. Based on these geometric identities, we proposed a new geometric PDE for the Willmore flow and derived its corresponding weak formulation. We employed piecewise linear elements for spatial discretization and the backward Euler method for time discretization, resulting in an implicit energy-stable PFEM full-discretization solved via the Newton-Raphson iteration. We further proved its unconditional energy stability at the full-discretized level. The proposed energy-stable PFEM demonstrated a robust second-order convergence rate in $L^\infty$-norm for $V$ and $\kappa$, and in manifold distance for the curve $\Gamma$. Furthermore, it exhibited good mesh quality, with most iterations converging within 2 iterations for solving the nonlinear system at each time level. These theoretical and numerical results indicate the efficiency and accuracy of our proposed energy-stable PFEM in simulating the Willmore flow of planar curves.

In the future, we will further extend the two novel geometric identities to other geometric flows related to the Willmore energy, especially the Helfrich flow. We would also like to explore the artificial tangential velocities to further improve the mesh quality.

%===========================================================Acknowledgement=========================================================
%===================================================================================================================================

\bigskip

%===========================================================Acknowledgement=========================================================
%===================================================================================================================================
\section*{Acknowledgement}
This work was partially supported by the Ministry of Education of Singapore under its AcRF Tier 2 funding MOE-T2EP20122-0002 (A-8000962-00-00). Part of the work was done when the authors were visiting the Institute of Mathematical Science at the National University of Singapore in 2023.

%\begin{acknowledgements}
%If you'd like to thank anyone, place your comments here
%and remove the percent signs.
%\end{acknowledgements}

% Authors must disclose all relationships or interests that 
% could have direct or potential influence or impart bias on 
% the work: 
%
% \section*{Conflict of interest}
%
% The authors declare that they have no conflict of interest.

% BibTeX users please use one of
\bibliographystyle{siamplain}
\bibliography{thebib}     % name your BibTeX data base

\begin{comment}

\end{comment}

\end{document}